\numberwithin{equation}{section}
 \keywords{Ternary Goldbach, Hardy--Littlewood method, exponential sums over primes.  }
 \subjclass{Primary: 11P32. Secondary: 11P55, 11L07, 11L20}
\newcommand{\kommentar}[1]{}
\newcommand{\Z}{\mathbb{Z}}
\newcommand{\m}{\mathrm{m}}
\newcommand{\normone}[1]{\left\lVert#1\right\rVert}
\newtheorem{thm}{Theorem}[section]
\newtheorem{prop}[thm]{Proposition}
\newtheorem{lem}[thm]{Lemma}
\newcommand{\e}{\mathrm{e}}
\theoremstyle{remark}
\newtheorem{rem}{Remark}[section]
\title{Almost all primes are not needed in Ternary Goldbach}
\author{Debmalya Basak, Raghavendra N. Bhat, Anji Dong and Alexandru Zaharescu}
\address{
Debmalya Basak: Department of Mathematics,
University of Illinois Urbana-Champaign,
Altgeld Hall, 1409 W. Green Street,
Urbana, IL, 61801, USA}
\email{dbasak2@illinois.edu}
\address{
Raghavendra N. Bhat: Department of Mathematics,
University of Illinois Urbana-Champaign,
Altgeld Hall, 1409 W. Green Street,
Urbana, IL, 61801, USA}
\email{rnbhat2@illinois.edu}
\address{
Anji Dong: Department of Mathematics,
University of Illinois Urbana-Champaign,
Altgeld Hall, 1409 W. Green Street,
Urbana, IL, 61801, USA}
\email{anjid2@illinois.edu}
\address{
Alexandru Zaharescu: Department of Mathematics,
University of Illinois Urbana-Champaign,
Altgeld Hall, 1409 W. Green Street,
Urbana, IL, 61801, USA and Simion Stoilow Institute of Mathematics of the Romanian Academy, 
P. O. Box 1-764, RO-014700 Bucharest, Romania}
\email{zaharesc@illinois.edu} 
\begin{document}
\setcounter{tocdepth}{1}
\begin{abstract}The ternary Goldbach conjecture states that every odd number $m \geqslant 7$ can be written as the sum of three primes. We construct a set of primes $\mathbb{P}$ defined by an expanding system of admissible congruences such that almost all primes are not in $\mathbb{P}$ and still, the ternary Goldbach conjecture holds true with primes restricted to $\mathbb{P}$.
\end{abstract}
\maketitle
\tableofcontents
\section{Introduction}\label{sec: Introduction}
In a letter to Euler in 1742, Goldbach stated that every even integer $m \geqslant 4$ can be written as the sum of two primes. This conjecture, known as Goldbach's conjecture, has been the subject of extensive research over the years. Although Goldbach's conjecture is still unsolved, many striking results have been obtained in several different directions. One such direction consists of results which show that almost all even positive integers can be written as a sum of two primes. The first non-trivial, although conditional, result in this direction was obtained by Hardy and Littlewood\cite{hardy1924}. Concerning the exceptional set (that is, the set of possible even integers that cannot be written as a sum of two primes), denoted by $\mathcal{E}$, Hardy and Littlewood\cite{hardy1924} showed that assuming the Generalized Riemann Hypothesis, the estimate
\[ E(X)\ll_{\varepsilon} X^{1/2+\varepsilon}
\]
holds for any $\varepsilon>0$ where $E(X)=\#\{n \leqslant X : n \in \mathcal{E}\}$. Since then, many results in this direction have been obtained over the years, see  Vinogradov \cite{vinogradov1937}, Van der Corput\cite{corput1937}, Cudakov\cite{cudakov1938},  Estermann\cite{estermann1938}, Vaughan\cite{vaughan1972},  Montgomery and Vaughan\cite{montgomery1975},  Chen and Liu\cite{chen1989, chen_2_1989}, Goldston \cite{goldston1992},  Li\cite{li2000,li_2_2000},  and Lu\cite{lu2010}. Currently the best known unconditional bound, proved in 2018, obtained by Pintz\cite{pintz2018}, states that there is a constant $X_2$ such that for any $X>X_2$, \[
E(X)<X^{0.72},
\] where $E(X)=\#\{n \leqslant X : n \in \mathcal{E}\}$.
\par
Another direction consists of results that show that all large even numbers can be represented as a sum of a prime and a number that is a product of bounded number of prime factors. Results in this direction were established by Rényi\cite{renyi1948}, Pan\cite{pan1962}, and Wang\cite{wang1962}, culminating with Chen's theorem\cite{chen1966, chen1973}, which shows that every large even number can be written as a sum of a prime and a product of at most two prime factors. Generalizations of Chen’s theorem have also been obtained. Lu and Cai \cite{lucai1998, lucai1999} proved more general versions of Chen's theorem for short intervals and arithmetic progressions. Cai\cite{cai2002} further proved that every large enough even integer $n$ is a sum of a prime less than $n^{0.95}$ and a number with at most two prime factors. An explicit version of Chen's theorem was proved by Bordignon, Johnston and Starichkova \cite{bordignon2022}.
\par
In yet another direction, results have been obtained showing that all large enough even numbers can be written as a sum of two primes plus a bounded number of powers of 2. The reader is referred to Linnik\cite{Linnik1951}, Gallagher\cite{gallagher1975}, Li\cite{li2000_powers}, Heath-Brown and Putcha\cite{HeathBrown2002}, Pintz and Ruzsa\cite{Pintz2003}, as well as Khalfalah and Pintz\cite{Khalfalah2006}. 
\par
In his correspondence with Euler, Goldbach also proposed that every odd integer $m \geqslant 7$ can be written as the sum of three primes. This is known as the ternary Goldbach conjecture (also called Goldbach's weak conjecture). Hardy and Littlewood showed \cite{hardy1922} assuming the generalized
Riemann hypothesis, that every sufficiently large odd number is a
sum of three primes. Vinogradov \cite{vinogradov1937} used his new estimates on exponential sums over primes to prove unconditionally that the ternary Goldbach conjecture is true for all sufficiently large odd numbers. More recently, Helfgott\cite{helfgott2013} proved the conjecture for all odd numbers larger than or equal to 7.
\par
In the present paper, our primary aim is to construct a set of primes $\mathbb{P}$ such that almost all primes are not in $\mathbb{P}$ and still, the ternary Goldbach conjecture holds true with primes taken from $\mathbb{P}$. We will prove the following result.
\begin{thm}\label{Main Theorem}
There exists a set of primes $\mathbb{P}$ such that
\begin{itemize}
\item[(a)] For any positive integer $B$, 
\[
\lim_{X \to \infty} \frac{(\log X)^B\cdot\# \{ p \in \mathbb{P} : p \leqslant X \} }{X}=0, \quad \textnormal{and} 
\]
\item[(b)] Every odd number $m \geqslant 7$ is the sum of three primes in $\mathbb{P}$.
\end{itemize}
\end{thm}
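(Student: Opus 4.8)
The plan is to construct $\mathbb{P}$ from an expanding system of admissible congruences and to prove part (b) with the Hardy--Littlewood circle method, in the shape of Vinogradov's theorem \cite{vinogradov1937} for three primes in arithmetic progressions. First I would fix a very slowly increasing sequence of positive integers $A_1\leqslant A_2\leqslant\cdots$ with $A_k\to\infty$, a rapidly increasing sequence $N_0<N_1<N_2<\cdots$ with $\log N_k\leqslant(\log N_{k-1})^{2}$, for each $k$ an odd prime $q_k$ with $q_k\asymp(\log N_{k-1})^{A_k}$, and a set $S_k\subseteq(\Z/q_k\Z)^{\times}$ with $|S_k|\ll q_k^{1/3}$ and $S_k+S_k+S_k=\Z/q_k\Z$ (writing residues in base $\lceil q_k^{1/3}\rceil$ exhibits $\Z/q_k\Z$ as a threefold sumset of three ``digit'' sets of size $\ll q_k^{1/3}$, which one translates into the units). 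Then set
\[
\mathbb{P}=\mathbb{P}_0\cup\bigl\{\,p\text{ prime}\ :\ p\bmod q_j\in S_j\ \text{ for every }j\text{ with }N_{j-1}<p\,\bigr\},
\]
where $\mathbb{P}_0$ is the finite set of primes appearing in fixed representations of the odd numbers in $[7,CN_0]$ for a suitable absolute constant $C$ (these exist by Helfgott's theorem \cite{helfgott2013}). Thus a prime $p$ in the block $I_k:=(N_{k-1},N_k]$ must obey the congruences modulo $q_1,\dots,q_k$, so its residue lies in one of $\prod_{j\leqslant k}|S_j|$ classes modulo $Q_k:=q_1\cdots q_k$, and ``admissible'' refers precisely to the requirement $S_j+S_j+S_j=\Z/q_j\Z$. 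The two sequences must be calibrated — with $A_k$ taken as large as the size of $N_{k-1}$ allows — so that $\log Q_k\ll A_k\log\log N_{k-1}$ (the earlier $q_j$ form a geometrically decaying tail) with $N_{k-1}$ large enough in terms of $A_k$ for the Siegel--Walfisz theorem, hence Vinogradov's method, to apply throughout $I_k$ (this uses, harmlessly, the ineffective Siegel bound), while $\log N_k$ does not outgrow $(\log N_{k-1})^2$, so that the density-cutting factor $Q_k^{-2/3}$, though only a fixed negative power of $\log N_{k-1}$, eventually beats every power of $1/\log N_k$.

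For part (a): by the Brun--Titchmarsh inequality, for $X\in I_k$ one has
\[
\#\{p\in\mathbb{P}:p\leqslant X\}\ \ll\ \#\{p\in\mathbb{P}:p\leqslant N_{k-1}\}\ +\ \frac{\prod_{j\leqslant k}|S_j|}{\phi(Q_k)}\cdot\frac{X}{\log X},
\]
and since the $q_j$ are distinct primes, $\prod_{j\leqslant k}|S_j|/\phi(Q_k)=\prod_{j\leqslant k}\bigl(|S_j|/(q_j-1)\bigr)\ll Q_k^{-2/3}\ll(\log N_{k-1})^{-2A_k/3}$. As $\log X\leqslant\log N_k\leqslant(\log N_{k-1})^2$, the second term is $o\bigl(X/(\log X)^{B}\bigr)$ whenever $A_k>3B$, hence for all large $k$; the first term is controlled in the same way, the contribution of the earlier blocks being dominated by that of $I_{k-1}$ because the $N_j$ grow fast. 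A short induction on $k$ yields (a).

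For part (b): let $m\geqslant 7$ be odd. If $m\leqslant CN_0$, a representation already lives in $\mathbb{P}_0$. Otherwise $m\in I_k$ for a unique $k\geqslant 1$; by the Chinese Remainder Theorem pick residues $t_1,t_2,t_3$ modulo $Q_k$ with $t_i\bmod q_j\in S_j$ for every $j\leqslant k$ and $t_1+t_2+t_3\equiv m\pmod{Q_k}$ — possible exactly because $S_j+S_j+S_j=\Z/q_j\Z$ for each $j$ — and count the solutions of $p_1+p_2+p_3=m$ in primes $p_i\equiv t_i\pmod{Q_k}$. Since $\log Q_k\ll A_k\log\log m$ and $m$ is large in terms of $A_k$, Vinogradov's method gives a positive main term (positive because the $t_i$ are units and the congruence is solvable) with a minor-arc error smaller by an arbitrary power of $\log m$, and the few tuples with some $p_i\leqslant N_0$ contribute a strictly smaller order once $m>CN_0$. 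Hence there is a solution with all $p_i>N_0$. For each such $i$, $p_i<m\leqslant N_k$ places $p_i$ in a block $I_{k'}$ with $k'\leqslant k$, while $p_i\equiv t_i\pmod{Q_k}$ forces $p_i\bmod q_j\in S_j$ for all $j\leqslant k$, in particular for all $j\leqslant k'$; therefore $p_1,p_2,p_3\in\mathbb{P}$, which is (b).

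The step I expect to be the genuine obstacle is exactly this calibration: Vinogradov's method requires the modulus on the $k$-th block to be polylogarithmic in $m\asymp N_k$, whereas part (a) requires it to beat every fixed power of $\log N_k$; the two are reconciled only by letting the blocks grow fast (so that the modulus, a fixed power of $\log N_{k-1}$, becomes an enormous power of $\log N_k$) while preventing $\log N_k$ from outgrowing $(\log N_{k-1})^2$ (so that the Siegel--Walfisz range still covers all of $I_k$), and by letting the exponents $A_k$ tend to infinity. Everything else is routine bookkeeping, apart from the appeal to the ineffective Siegel bound, which is harmless here since only the existence of $\mathbb{P}$ is asserted.
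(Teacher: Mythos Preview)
Your approach is correct and shares the paper's core idea—restrict primes to roughly $q^{1/3}$ residue classes modulo $q$ whose threefold sumset covers $\Z/q\Z$, then invoke the circle method—but the implementation differs in two ways worth noting. First, the paper takes the modulus on each block to be the product $q_0=\prod_{p<z}p$ of all small primes and builds $\mathbb{P}$ as a union of block-local sets $\mathbb{P}_u\subseteq[u,2u]$, so that all three summands for a given $m$ come from the same dyadic window; you instead use single large primes $q_k$ with accumulating congruences, allowing the summands to land in different blocks. Second, and more substantially, the paper forms $S(\alpha)$ as a sum over \emph{all} admissible residues and then spends two full sections decomposing the resulting arithmetic factor to prove it positive (the key step being a multiplicativity $G(q_0)=\prod_j G(p_j)$); you fix a single triple $(t_1,t_2,t_3)$ in advance via the Chinese Remainder Theorem and appeal to the standard Vinogradov theorem for primes in three fixed progressions, which bypasses that analysis entirely. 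Your route is shorter and suffices for the stated theorem; the paper's route buys an asymptotic for the total weighted count of representations from $\mathbb{P}_u$ and localizes them to a single block. Two small cautions: make sure your $S_k$ can genuinely be taken inside the units while still satisfying $S_k+S_k+S_k=\Z/q_k\Z$ (the paper handles the zero-digit boundary cases by an explicit case analysis, and you should too), and check that the minor-arc saving really beats the $\varphi(Q_k)^{-3}$ factor in your main term—this forces the circle-method parameters to depend on $A_k$, which is exactly the calibration issue you already flag.
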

\par
The proof of Theorem \ref{Main Theorem} relies on a concrete construction of a set of primes $\mathbb{P}$ satisfying a growing class of congruence constraints as the size of the primes increase. This is responsible for our set $\mathbb{P}$ to satisfy condition (a) in the statement of Theorem \ref{Main Theorem}. As we shall see later, the above class of congruence constraints are governed by products of small primes.  At each stage of our construction, we employ the Hardy--Littlewood Circle method in order to establish asymptotic formulas for the number of representations as sums of three primes from $\mathbb{P}$. Collectively, all the stages together imply condition (b) in the statement of Theorem \ref{Main Theorem}.
\subsection*{Structure of the Paper} The paper is organized as follows. We introduce some standard notations in Section \ref{sec: general notations}. Section \ref{sec: Overall Strategy} outlines our approach to proving Theorem \ref{Main Theorem}, showing that the proof hinges on a crucial lemma. Section \ref{sec: Initial Setup} addresses the construction of our specific set of primes and demonstrates that this construction fulfills condition (a) of our key lemma. The setup for applying the Hardy--Littlewood method, which is used to address condition (b) of the lemma, is also introduced at the end of Section \ref{sec: Initial Setup}. Sections \ref{Minor Arcs : Sec} and \ref{Major Arcs : Sec} provides the details of the minor arc and major arc estimates respectively. The delicate decomposition of the arithmetic factor is presented across Sections \ref{sec: The arithmetic factor} and \ref{sec: Arithmetic factor II}. Finally, in Section \ref{sec: Completion}, we complete the proof of part (b) of the equivalent lemma, thereby concluding the proof of our main theorem.
\section{General Notations}\label{sec: general notations} We employ some standard notation that will be used throughout the article.
\begin{itemize} 
\item The expressions $f(X)=O(g(X))$, $f(X) \ll g(X)$, and $g(X) \gg f(X)$ are equivalent to the statement that $|f(X)| \leqslant C|g(X)|$ for all sufficiently large $X$, where $C>0$ is an absolute constant. A subscript of the form $\ll_{\alpha}$ means the implied constant may depend on the parameter $\alpha$. Dependence on several parameters is indicated similarly, as in $\ll_{\alpha, \lambda}$.
\item The notation $a \asymp A$ means that there exist absolute constants $c_1,c_2>0$ such that $c_1A \leqslant a \leqslant c_2A$.
\item The function $\varphi$ denotes the Euler totient function.
\item The function $\Lambda$ denotes the Von-Mangoldt function.
\item The function $\omega(n)$ denotes the prime omega function, which counts the number of prime divisors of a natural number $n$. 
\item The M\"{o}bius function denoted by $\mu$, is a multiplicative function defined in the following way: for each prime $p$,
\[\mu(p^k) := \begin{cases}
  -1,  & k=1 \\
  0, & k \geqslant 2.
\end{cases}\]
    \item Given two arithmetic functions $f$ and $g$, $f*g$ denotes the Dirichlet convolution of $f$ and $g$.
\item The notation $e(x)$ stands for $\exp(2\pi i x)$.
\item Given $\alpha \in \mathbb{R}$, the notation $\|\alpha\|$ denotes the smallest distance of $\alpha$ to an integer.
\item Given a prime $p$, the notation $\Z_{p}$ refers to the set of residue classes modulo $p$.
\item The notation $\#(S)$ stands for the cardinality of a set $S$. 
\item The notation $p^{v}||q$ means $p^{v}$ is the exact power of $p$ that divides $q$. 
\end{itemize}
\section{Proof of Main Theorem: Overall Strategy}\label{sec: Overall Strategy}
In order to prove Theorem \ref{Main Theorem}, we will prove the following lemma.

\begin{lem}\label{Main Lemma}
Fix a positive integer $A$. There exists $u_A>0$ such that for all $u\geqslant u_A$,
\begin{itemize}
\item[(a)] There exists a set $\mathbb{P}_u$ of primes in $[u, 2u]$ satisfying
\begin{align}\label{Main Lemma Inequality}
\# (\mathbb{P}_u) \leqslant \frac{u}{(\log u)^A}, \quad \textnormal{and} 
\end{align}
\item[(b)] For any odd number $m \in [4u,5u]$, there exist $p_1,p_2, p_3 \in \mathbb{P}_{u}$ such that $m=p_1+p_2+p_3$.
\end{itemize}
\end{lem}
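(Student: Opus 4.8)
The plan is to construct $\mathbb{P}_u$ by prescribing, for each scale $u$, a finite set of admissible congruence conditions modulo a product of small primes, and then run the Hardy--Littlewood circle method for sums of three primes subject to these congruence restrictions. Set $z = z(u)$ a slowly growing parameter (something like $z = (\log u)^{c}$ or $z=\log\log u$ — chosen at the end so that (a) holds), let $Q = \prod_{p \leqslant z} p$, and for each prime $p \leqslant z$ fix a residue class, or a small admissible collection of residue classes, $a_p \pmod p$ avoiding $0$. Define $\mathbb{P}_u$ to be the primes in $[u,2u]$ lying in the resulting class $a \pmod Q$ (or union of a bounded number of such classes, enough to keep the singular series from vanishing on the target $m$). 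By the prime number theorem in arithmetic progressions, or more honestly by a Brun--Titchmarsh / sieve upper bound which is all part (a) needs, $\#(\mathbb{P}_u) \ll u/(\varphi(Q)\log u) \ll u \prod_{p\leqslant z}(1-1/p)/\log u$, and since $\prod_{p \leqslant z}(1-1/p) \asymp 1/\log z \to 0$ and can be pushed below $(\log u)^{-A}$ by taking $z$ a sufficiently large power of $\log u$, part (a) follows; the subtlety is that we need $z$ large enough for (a) but the circle-method error terms in (b) must still be controlled uniformly, so $z$ cannot grow too fast.

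For part (b), fix an odd $m \in [4u,5u]$ and count weighted representations
\[
R(m) = \sum_{\substack{p_1+p_2+p_3=m \\ p_i \in \mathbb{P}_u}} (\log p_1)(\log p_2)(\log p_3) = \int_0^1 S_u(\alpha)^3 e(-\alpha m)\, d\alpha,
\]
where $S_u(\alpha) = \sum_{p \in \mathbb{P}_u} (\log p)\, e(\alpha p)$ (or the analogous von Mangoldt sum restricted to $[u,2u]$ and to the chosen residues). Dissect $[0,1)$ into major arcs $\mathfrak{M}$ around rationals $b/q$ with $q \leqslant P$ for a suitable $P = P(u)$ (a power of $\log u$, compatible with $z$) and minor arcs $\mathfrak{m}$. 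On the major arcs, the congruence restriction simply refines the usual local analysis: the singular series becomes a product of local densities $\prod_p \beta_p(m)$ where for $p \leqslant z$ the factor $\beta_p(m)$ counts solutions of $a_{p,1}+a_{p,2}+a_{p,3}\equiv m \pmod p$ among the prescribed residues, and for $p > z$ it is the classical factor; the admissibility of the chosen residues (here is where one needs either a flexible choice of $a_p$ depending on $m \bmod p$, or a bounded union of classes) guarantees $\beta_p(m) > 0$ for every relevant $p$, so the singular series is bounded below by a positive constant times $\prod_{p\leqslant z}(\text{something})$ — and one must check this lower bound beats the minor-arc error. The main term is then $\gg (m^2/(\log m)^3)\cdot \mathfrak{S}(m)/\varphi(Q)^2$ or similar, which is a genuinely positive quantity.

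The main obstacle — and the heart of the paper, as the section titles "The arithmetic factor I/II" suggest — is reconciling the two competing demands: part (a) forces $Q = \prod_{p\leqslant z}p$ to be as large as $(\log u)^{A+o(1)}$ in effective size (so $z$ a large power of $\log u$ hence $Q$ is already of size $\exp((\log u)^{1+o(1)})$, much larger than any fixed power of $\log u$), while the minor-arc estimate for $S_u(\alpha)$ — which we would get from Vinogradov's method / Vaughan's identity applied to the restricted sum, losing a factor like $\varphi(Q)$ or $Q$ relative to the unrestricted sum — together with the major-arc error, must still be smaller than the main term, which itself has shrunk by a factor $1/\varphi(Q)^2$. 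So the real work is (i) a minor-arc bound $\sup_{\alpha \in \mathfrak{m}}|S_u(\alpha)| \ll (u/\varphi(Q))\cdot(\text{small})$ with the "small" factor a negative power of $\log u$ strong enough to absorb the $L^2$ contribution $\int |S_u|^2 \ll u^2/\varphi(Q)$, and (ii) a careful lower bound on the singular series showing the local factors at primes $p \leqslant z$, though individually close to their "expected" size, do not conspire to make $\mathfrak{S}(m)$ decay faster than $1/\varphi(Q)$ — this is exactly the delicate decomposition of the arithmetic factor carried out in Sections~\ref{sec: The arithmetic factor} and \ref{sec: Arithmetic factor II}. Once the main term is shown to dominate, $R(m) > 0$ gives the desired representation $m = p_1+p_2+p_3$ with each $p_i \in \mathbb{P}_u$.
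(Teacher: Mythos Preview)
There is a genuine gap in (b). With a \emph{single} residue class $a_p\pmod p$ per prime $p\leqslant z$, or any collection $\mathcal{R}_p$ of size bounded independently of $p$, the set $\mathbb{P}_u$ is fixed once and for all, and the triple sums $r_1+r_2+r_3$ with $r_i\in\mathcal{R}_p$ hit at most $|\mathcal{R}_p|^3$ residues mod $p$; so for every prime $p\leqslant z$ with $p>|\mathcal{R}_p|^3$ the local factor $\beta_p(m)$ vanishes for most $m$, and no representation exists. Your fallback of letting $a_p$ depend on $m\bmod p$ is not available, since $\mathbb{P}_u$ must serve every odd $m\in[4u,5u]$ at once. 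The pigeonhole bound $|\mathcal{R}_p|^3\geqslant p$ is thus unavoidable, so $|\mathcal{R}_p|$ must grow like $p^{1/3}$. (Your count for (a) also contains a slip --- you have dropped the factor $Q$ from the denominator when passing from $u/(\varphi(Q)\log u)$ to $u\prod_{p\leqslant z}(1-1/p)/\log u$ --- but that is repairable; what is not repairable with bounded $\mathcal{R}_p$ is (b).)

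The paper's key construction supplies exactly this. With $z=3A\log\log u$, so that $q_0=\prod_{p\leqslant z}p\asymp(\log u)^{3A}$, for each $p_j\leqslant z$ it takes
\[
\mathcal{R}_j=\{1,\dots,a_j\}\cup\{a_j,2a_j,\dots,a_j^2\}\cup\{a_j^2,2a_j^2,\dots,a_j^3\},\qquad a_j=\lceil p_j^{1/3}\rceil+1,
\]
a set of size $\asymp 3p_j^{1/3}$, and checks by a short case analysis (Lemma~4.1, essentially writing $n$ in base $a_j$) that $\mathcal{R}_j+\mathcal{R}_j+\mathcal{R}_j$ covers all of $\mathbb{Z}/p_j\mathbb{Z}$. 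This gives density $\prod_j|\mathcal{R}_j|/p_j\asymp q_0^{-2/3}\asymp(\log u)^{-2A}$ for (a), and guarantees every local factor $G(p_j)>0$ in the singular series for (b). Because $q_0$ is only a fixed power of $\log u$, both the minor-arc loss (detect the congruence by additive characters, shift $\alpha\mapsto\alpha+\ell/q_0$, apply Vinogradov's bound) and the major-arc error are mere powers of $\log u$, absorbed by choosing the major-arc cutoff $P=(\log u)^B$ with $B$ large in terms of $A$; the decomposition of the arithmetic factor in Sections~7--8 then finishes. The $p^{1/3}$-residue construction is the idea your proposal is missing.
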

Assuming Lemma \ref{Main Lemma}, let us first prove Theorem \ref{Main Theorem}.
\par
\begin{proof}[Proof of Theorem \ref{Main Theorem}]
We begin with the following construction. Fix absolute constants $c_1,c_2,c_3,c_4$ such that
\begin{align}\label{Choice of Constants}
c_1<1<c_2<\frac{5c_1}{4} <c_3< \frac{5c_2}{4}<c_4<\frac{5c_3}{4}<2<\frac{5c_4}{4}.
\end{align}
For each $A\in \mathbb{N}$, Lemma \ref{Main Lemma} provides us with $u_A$. Define $\ell_A$ be the smallest integer such that 
\begin{align}\label{l_A definition}
u_A \leqslant c_1 2^{\ell_A} .
\end{align}
For each $\ell_A \leqslant \ell< \ell_{A+1}$, we apply Lemma \ref{Main Lemma} repeatedly with the choices $u_i= c_i 2^{\ell}$, where $1 \leqslant i \leqslant 4$. By Lemma \ref{Main Lemma}, we obtain the sets of primes $\mathbb{P}_{A,u_i} \subseteq [c_i 2^{\ell}, 2 c_i 2^{\ell}]$. Let $\mathbb{P}_{A,\ell} = \bigcup_{i=1}^{4} \mathbb{P}_{u_i} $. Then for each $\ell_A \leqslant \ell< \ell_{A+1}$ we have:
\begin{itemize}
\item[(i)] $\mathbb{P}_{A,\ell} \subseteq [c_1 2^{\ell}, 2 c_4 2^{\ell}]$ is a set of primes satisfying
\[
\# (\mathbb{P}_{A,\ell}) \leqslant \frac{4c_42^{\ell}}{(\log c_12^{\ell})^A}.
\]
    
\item[(ii)] For any odd number $m \in [2^{\ell+2}, 2^{\ell+3}]$, there exist $p_1,p_2, p_3 \in \mathbb{P}_{A,\ell}$ such that $m=p_1+p_2+p_3$.
\end{itemize}
To see this, observe that the first condition is satisfied by maximizing the right hand side of \eqref{Main Lemma Inequality} as $u$ varies over the $u_i$'s. For the second condition, note that for any $m\in [2^{\ell+2}, 2^{\ell+3}]$, $m \in [4u_i,5u_i]$ for some $1 \leqslant i \leqslant 4$.
\par
Let us define
\begin{align}\label{Prime Sets Definitions}
\mathbb{P}_{0} =\{ p \textrm{ prime} : p \leqslant 20u_1 \}, \quad \mathbb{P}_1 = \bigcup_{A \geqslant 1} \bigcup_{\ell_A \leqslant \ell < \ell_{A+1}} \mathbb{P}_{A,\ell} \quad \textrm{and} \quad \mathbb{P} = \mathbb{P}_{0} \cup \mathbb{P}_1.
\end{align}
Condition (b) from the statement of Theorem \ref{Main Theorem} holds trivially for all odd $m$ up to $20u_1$ due to Helfgott \cite{helfgott2013}. So we may assume $m >20u_1$. Suppose $m \in [2^{\ell+2},2^{\ell+3})$ for some $\ell \in \mathbb{N}$. Since $m>20u_1$, we must have $\ell \geqslant \ell_1$. Suppose $\ell_A \leqslant \ell< \ell_{A+1}$ for some $A \in \mathbb{N}$.  By our construction of the set $\mathbb{P}_{A,\ell}$, there exist $p_1,p_2,p_3 \in \mathbb{P}_{A,\ell} \subseteq \mathbb{P}$ such that $m=p_1+p_2+p_3$. Therefore condition (b) is satisfied.
\par
Now we prove condition (a). Fix $X \geqslant 2$ large. Let $A_0$ to be the smallest positive integer such that
\begin{align}\label{Define A_0}
X <c_1 2^{\ell_{A_0}}.
\end{align}
Let $B_0$ be the smallest positive integer such that
\begin{align}\label{Define B0}
   \sqrt{X}< c_1 2^{\ell_{B_0}}.
\end{align} 
Then we have
\begin{align}
\# \{ p \in \mathbb{P} : p \leqslant X \} &\leqslant \sum_{1 \leqslant A<A_0}\sum_{\ell_A \leqslant \ell<\ell_{A+1}}  \# \{ \mathbb{P}_{A,\ell}\} \notag \\
& \leqslant \sum_{1 \leqslant A<B_0} \sum_{\ell_A \leqslant \ell<\ell_{A+1}}   \# \{ \mathbb{P}_{A,\ell} \}+\sum_{B_0 \leqslant A<A_0} \sum_{\ell_A \leqslant \ell<\ell_{A+1}}   \# \{ \mathbb{P}_{A,\ell} \} \notag \\
& \ll_{B_0} \sqrt{X}\log X + \frac{X}{(\log X)^{B_0-1}} \ll_{B_0} \frac{X}{(\log X)^{B_0-1}}. \label{Primes Bound involving B0}
\end{align}
Note that $u_A$ and $\ell_A$ are monotonically increasing as $A$ increases. Hence for any fixed positive integer $B$, letting $X$ tend to infinity, we can choose $B_0=B+1$ in \eqref{Define B0} and \eqref{Primes Bound involving B0}, thereby satisfying condition (a).
\end{proof}
The proof of Lemma \ref{Main Lemma} is more intricate and will be presented across the remainder of the paper.
\section{Initial Setup}\label{sec: Initial Setup}
In this section, we lay some preliminary groundwork for the proof of Lemma \ref{Main Lemma}.
\subsection{Residue Class Constructions}
Let $A \in \mathbb{N}$ be fixed and suppose $u$ be sufficiently large depending on $A$. Let 
\begin{align}\label{define Z}
z=3A\log\log u.
\end{align}
Enumerate the primes $p_1<p_2<\dots< p_k \leqslant z$ and let 
\begin{align}\label{Define q_0}
q_0= p_1p_2\cdots p_k.  
\end{align}
Then we have
\begin{align}\label{Size of q0}
q_0=\prod _{p<z} p \asymp   e^z =e^{3A\log\log u} =(\log u)^{3A}.
\end{align}
For each $j\in \{1,\dots, k\}$, let 
\begin{align}\label{Aj Definition}
a_j =\lceil p_j^{1/3}\rceil +1
\end{align}
and define 
\begin{align}\label{Rj Definition}
\mathcal{R}_j=\mathcal{R}_{j,1} \cup \mathcal{R}_{j,2} \cup \mathcal{R}_{j,3},
\end{align}
where 
\begin{align}\label{R123 Definition}
\mathcal{R}_{j,1} =\{1,2,\dots ,a_j\},\quad \mathcal{R}_{j,2}=\{a_j,2a_j,\dots ,a_j^2\} \quad
\textrm{and} \quad \mathcal{R}_{j,3} &=\{a_j^2,2a_j^2,\dots ,a_j^3\}.
\end{align}
We start with the following lemma.
\begin{lem}\label{sum of three numbers modulo pj}
For any $n\in \mathbb{N} $ and $1 \leqslant j \leqslant k$, there exist $r_1,r_2,r_3\in \mathcal{R}_{j}$ such that $ n\equiv r_1+r_2+r_3 \mod p_j.$
\end{lem}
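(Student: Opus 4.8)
The plan is to show that the sumset $\mathcal{R}_{j,1}+\mathcal{R}_{j,2}+\mathcal{R}_{j,3}$ is a block of \emph{consecutive} integers whose length exceeds $p_j$, so that it meets every residue class modulo $p_j$; since $\mathcal{R}_{j,i}\subseteq\mathcal{R}_j$ for each $i$, this immediately yields the claim. The three sets are built so as to imitate the digits of a base-$a_j$ expansion: an element of $\mathcal{R}_{j,1}$ has the form $r_1$ with $1\leqslant r_1\leqslant a_j$, an element of $\mathcal{R}_{j,2}$ has the form $a_j t$ with $1\leqslant t\leqslant a_j$, and an element of $\mathcal{R}_{j,3}$ has the form $a_j^2 s$ with $1\leqslant s\leqslant a_j$, so that a sum $r_1+r_2+r_3$ from these sets takes the shape $r_1+a_j t+a_j^2 s$.

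The main step is a telescoping-of-blocks argument. First I would fix $s$ and $t$ and let $r_1$ run over $\{1,\dots,a_j\}$, which produces the block of $a_j$ consecutive integers from $a_j^2 s+a_j t+1$ to $a_j^2 s+a_j t+a_j$. Keeping $s$ fixed and letting $t$ run over $\{1,\dots,a_j\}$, consecutive blocks abut exactly (the block for $t$ ends at $a_j^2 s+a_j(t+1)$ and the block for $t+1$ begins at $a_j^2 s+a_j(t+1)+1$), so their union is the interval of $a_j^2$ consecutive integers from $a_j^2 s+a_j+1$ to $a_j^2 s+a_j^2+a_j$. Letting $s$ run over $\{1,\dots,a_j\}$ and applying the same endpoint telescoping once more, the union over all admissible $r_1,t,s$ is precisely the interval of integers from $a_j^2+a_j+1$ to $a_j^3+a_j^2+a_j$, i.e. a run of exactly $a_j^3$ consecutive integers.

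To conclude, I would use that $a_j=\lceil p_j^{1/3}\rceil+1\geqslant p_j^{1/3}+1$, hence $a_j^3>p_j$; a run of more than $p_j$ consecutive integers is a complete residue system modulo $p_j$, so for any $n\in\mathbb{N}$ there exist $r_1,t,s\in\{1,\dots,a_j\}$ with $r_1+a_j t+a_j^2 s\equiv n\pmod{p_j}$, and then $r_1\in\mathcal{R}_{j,1}$, $a_j t\in\mathcal{R}_{j,2}$, $a_j^2 s\in\mathcal{R}_{j,3}$ are the required $r_1,r_2,r_3\in\mathcal{R}_j$. (Equivalently and more slickly, one may invoke uniqueness of the base-$a_j$ representation $N=c_0+c_1a_j+c_2a_j^2$ with $0\leqslant c_i\leqslant a_j-1$ for $0\leqslant N\leqslant a_j^3-1$, and shift each digit by $1$.) There is essentially no obstacle of substance here: the argument is purely combinatorial, and the only thing that needs genuine care is the bookkeeping of interval endpoints to be sure the blocks tile with neither gaps nor the range of $n$ being insufficient — both of which work out comfortably because $a_j$ exceeds $p_j^{1/3}$ by more than $1$.
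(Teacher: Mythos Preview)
Your proof is correct and in fact cleaner than the paper's. Both arguments rest on the same base-$a_j$ idea, but the paper fixes a representative $n\in\{1,\dots,p_j\}$, writes $n=w_2a_j^2+w_1a_j+w_0$ with $0\leqslant w_i<a_j$, and then runs a somewhat lengthy case analysis (Cases~1--3) to repair the representation whenever one or more of the digits $w_i$ vanish, since $0\notin\mathcal{R}_{j,i}$. You sidestep this entirely by observing that with the digit range shifted to $\{1,\dots,a_j\}$ the sumset $\mathcal{R}_{j,1}+\mathcal{R}_{j,2}+\mathcal{R}_{j,3}$ still tiles a contiguous block of integers --- the blocks telescope without gaps --- and that block has length $a_j^3>p_j$. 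Your argument thus proves the slightly stronger statement that one may always choose $r_i\in\mathcal{R}_{j,i}$ (one from each subset), whereas the paper's case analysis sometimes draws two of the $r_i$ from the same $\mathcal{R}_{j,i}$. The paper's approach buys nothing extra here; yours is shorter and avoids all the ad hoc splittings.
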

\begin{proof}
Since we are looking for solutions modulo $p_j$, it suffices to consider $n\in \Z/p_j\Z$, i.e. $n=1,2,\cdots, p_j$. For any $n$, since $a_j>p_j^{1/3}, $we can write $n$ in base $a_j$, that is, \[n=w_2a_j^2+w_1a_j+w_0,\] where $w_i<a_j$ for $i=1,2,3$. Note that $a_j\geqslant 3$. We now discuss in cases.\\
    
\noindent \textbf{Case 1} : Suppose $w_i\neq 0$ for all $w_i$. In this case, choosing $r_1=w_0, r_2 = w_1a_j, $ and $r_3=w_2a_j^2$ will yield the desired result.\\

\noindent \textbf{Case 2} : Suppose exactly one of the $w_i$ is 0. Assume that $w_0=0$. Then, if either $w_1$ or $w_2$ is greater than 1, say $w_1\geqslant 2$, then we have $n = w_2a_j^2+(w_1-1)a_j+a_j$. Otherwise we write $n = a_j^2+a_j = (a_j-1)a_j + a_j + a_j$. Now assume $w_1=0$. Again, if either $w_0$ or $w_2$ is greater than 1, then we are done. Otherwise, $n = a_j^2+1 = (a_j-1)a_j+a_j+1$. Finally, assume $w_2=0$. If either $w_0$ or $w_1$ is greater than 1, then split same as in other sub-cases. Otherwise, $n=a_j+1$, then $n=(a_j-1)+1+1$. \\

\noindent \textbf{Case 3} : Suppose exactly two of the $w_i$ are 0. Assume $n=w_2a_j^2$. If $w_2\geqslant 3$, then we have $n=a_j^2+a_j^2+(w_2-2)a_j^2$. If $w_2 = 2$, then $n = a_j^2 + (a_j-1)a_j + a_j$. If $w_2 = 1$, and if $p_j>8$, then $n = a_j +a_j+(a_j-2)a_j$; otherwise, we write $n=a_j+1+(a_j-1)$. Now, assume $n=w_1a_j$. If $w_1\geqslant 3$, then split in the same way as the above sub-case. If $w_1=2$, then $n=a_j + (a_j-1)+1$. If $w_1=1$, then $n=1+1+(a_j-2)$. Finally, consider the case when $n=w_0$. If $w_0\geqslant 3$, then $n = 1+1+(w_0-2)$. Otherwise, we will write $w_0 + p_j$ instead as the triple sum. Note that $w_0 + p_j$ is at most $p_j+2$. Since $a_j^3 \geqslant (p_j^{1/3}+1)^3 = p_j+7$, we can always write $w_0+p_j = w'_2a_j^2+w'_1a_j+w'_0$, where $w'_2>0$, and so will be reduced to previous cases.
\end{proof}
Note that
\begin{align}\label{Rj Set Size}
\#(\mathcal{R}_j)=3a_j= 3(\rceil p_j^{\frac{1}{3}}\rceil +1)=  3p_j^{\frac{1}{3}}+O(1).
\end{align}
Let   ${\mathcal{R}}_0$ be the set of integers $r\in[1,q_0] $  such that $(r,q_0) =1$ and for each $1 \leqslant j \leqslant k$, 
\[ r \equiv r_j \bmod p_j,
\]
for some $r_j \in \mathcal{R}_{j}$. By the Chinese Remainder Theorem, for $u$ large with respect to $A$, we have
\begin{align}\label{R0 Set Size}
\#(\mathcal{R}_0)= \prod_{j=1} ^k \#(\mathcal{R}_j) &= \prod_{j=1} ^k (3p_j^{\frac{1}{3}}+O(1)) \asymp q_0^{1/3}(\log u)^{\varepsilon},
\end{align}
for any $\varepsilon>0$. Define $\mathbb{P}_u$ to be the set of  primes $p$ in $[u, 2u]$ such that $p\equiv r \bmod q_0$ for some $r\in \mathcal{R}_0$. Trivially, one has
\begin{align}\label{Prime Set Bound 2}
\# (\mathbb{P}_u) \leqslant \frac{u}{q_0}\# (\mathcal{R}_0)\ll \frac{u (\log u)^{\varepsilon}}{(\log u)^{2A}}
\end{align}
for any $\varepsilon>0$. Therefore, in order to prove Lemma \ref{Main Lemma}, it suffices to show that for any odd $m \in[4 u, 5 u]$ that there exist $p_1, p_2, p_3 \in \mathbb{P}_u$ such that
$$
m=p_1+p_2+p_3.
$$	
We will do so using the Hardy--Littlewood Circle method.
\subsection{Major and Minor Arc Definitions} Let
\begin{align}\label{Sum Definition}
S(\alpha)=\sum_{\substack{u \leqslant k \leqslant 2u\\ k \bmod q_0 \in \mathcal{R}_0}}\Lambda(k) e(k \alpha)
\end{align}
where $\Lambda(n)$ denotes the Von-Mangoldt function. Let 
\begin{align}\label{Representation of m}
\mathfrak{R}(m) = \underset{\substack{u \leqslant k_1,k_2,k_3 \leqslant 2u \\ k_i \bmod q_0 \in \mathcal{R}_0 \\ k_1+k_2+k_3 = m}}{\sum \sum \sum} \Lambda(k_1)\Lambda(k_2) \Lambda(k_3).
\end{align} 
Our goal is to provide an asymptotic formula for $\mathfrak{R}(m)$ which we achieve in \eqref{Asymptotic Formula} below. To proceed, we begin by noting that the orthogonality relation
$$
\int_0^1 e(\alpha h) \, \dd  \alpha= \begin{cases}1 & \text { when } h=0, \\ 0 & \text { when } h \neq 0,\end{cases}
$$
gives
\begin{align}
\mathfrak{R}(m)=\int_0^1 S(\alpha)^3 e(-m \alpha)\, \dd \alpha. \label{Representation of m integral}
\end{align}
Therefore to prove condition (b) of Lemma \ref{Main Lemma}, we will estimate the integral on the right hand side of \eqref{Representation of m integral} To do so, we first define the major and minor arcs. Let $P = (\log u)^{B}$, $Q = u(\log u)^{-B}$ where $B>0$ will be chosen later in terms of $A$. When $1 \leqslant a \leqslant q \leqslant P$ and $(a, q)=1$, let
\begin{align}\label{Major Arcs}
\mathfrak{M}(q, a)=\left\{\alpha:|\alpha-a / q| \leqslant 1/Q\right\} .
\end{align}
We call $\mathfrak{M}(q, a)$ the major arcs. Let $\mathfrak{M}$ denote the union of the $\mathfrak{M}(q, a)$'s. The set $\mathfrak{m}=[0,1] \backslash \mathfrak{M}$ forms the minor arcs.
When $a / q \neq a^{\prime} / q^{\prime}$ and $q, q^{\prime} \leqslant P$, one has
$$
\left|\frac{a}{q}-\frac{a^{\prime}}{q^{\prime}}\right| \geqslant \frac{1}{q q^{\prime}}\geqslant \frac{1}{P^2} >\frac{2}{Q}.
$$
Thus the $\mathfrak{M}(q, a)$ are pairwise disjoint. By \eqref{Representation of m integral}, we have
\begin{align}\label{Major+Minor Arcs}
\mathfrak{R}(m) = \int_{\mathfrak{M}}S(\alpha)^3 e(-\alpha m) \, \dd   \alpha+\int_{\mathfrak{m}} S(\alpha)^3 e(-\alpha m) \, \dd   \alpha.
\end{align}
We treat the integral over the minor arcs, major arcs and the associated arithmetic factor across Sections \ref{Minor Arcs : Sec}, \ref{Major Arcs : Sec}, \ref{sec: The arithmetic factor} and \ref{sec: Arithmetic factor II}.
\section{Minor Arcs} \label{Minor Arcs : Sec}
Our goal in this section is to estimate the sum
\begin{align}\label{Minor Arc Sum}
S(\alpha)=\sum_{r \in \mathcal{R}_0} \sum_{\substack{u \leqslant k \leqslant 2 u \\ k \equiv r \bmod q_0}} \Lambda(k) e(k \alpha).
\end{align}
when $\alpha \in \mathfrak{m}$. We first record the following estimate for exponential sums over primes.
\begin{lem}\label{Vinogradov bound}
Let $u\geqslant 1$, $\alpha\in \mathbb{R}$ and consider a reduced fraction $a/q$ such that $|\alpha-a/q|\leqslant 1/q^2$. Then
\begin{align}
\sum_{1 \leqslant k \leqslant u} \Lambda(k)e(k\alpha) \ll \bigg(\frac{u}{q^{1/2}} +u^{4/5} +u^{1/2}q^{1/2}\bigg)(\log u)^4.
\end{align}
\end{lem}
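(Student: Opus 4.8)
The statement to prove is the classical Vinogradov bound for exponential sums over primes: for $\alpha \in \mathbb{R}$ with a rational approximation $a/q$ satisfying $(a,q)=1$ and $|\alpha - a/q| \leqslant 1/q^2$, one has
\[
\sum_{1 \leqslant k \leqslant u} \Lambda(k) e(k\alpha) \ll \Bigl( \frac{u}{q^{1/2}} + u^{4/5} + u^{1/2} q^{1/2} \Bigr) (\log u)^4.
\]
I would prove this by Vaughan's identity. The plan is: first decompose $\Lambda$ on $(U, u]$, for a parameter $U$ to be chosen as $U = u^{2/5}$, via the Vaughan identity
\[
\Lambda(n) = \sum_{\substack{b \mid n \\ b \leqslant U}} \mu(b) \log(n/b) \;-\; \sum_{\substack{bc \mid n \\ b,c \leqslant U}} \mu(b)\Lambda(c) \;+\; \sum_{\substack{bc \mid n \\ b > U,\, c > U}} \mu(b)\Lambda(c),
\]
valid for $n > U$; the contribution of $n \leqslant U$ is trivially $\ll U = u^{2/5} \ll u^{4/5}$. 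This splits the sum into three pieces, conventionally called the \emph{Type I} sums (the first two terms, which after rearranging divisor conditions become sums $\sum_{b} c_b \sum_{m} e(bm\alpha)$ with $b \leqslant U^2 = u^{4/5}$ and coefficients $c_b \ll \log u$, or $\ll d(b)\log u$ after grouping), and the \emph{Type II} sum (the third term, a genuine bilinear form $\sum_{b} \sum_{c} a_b b_c\, e(bc\alpha)$ with both $b$ and $c$ in a dyadic range bounded below by $U$ and above by $u/U$).

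Second, I would estimate the Type I sums: the inner sum over $m$ in an interval is a geometric series, so $\sum_{m \sim M} e(bm\alpha) \ll \min(M, \|b\alpha\|^{-1})$, and summing over $b \leqslant u^{4/5}$ with the divisor-bounded coefficients, together with the standard lemma $\sum_{b \leqslant Y} \min(u/b, \|b\alpha\|^{-1}) \ll (u/q + Y + q)(\log(2uq))$ which follows from the approximation $|\alpha - a/q| \leqslant 1/q^2$, yields a bound of the shape $(u/q + u^{4/5} + q)(\log u)^3$, absorbing the extra $\log u$ from the coefficients. Third, the Type II sum: apply Cauchy--Schwarz in the variable $b$ to remove the arithmetic coefficient $a_b$, square out, and swap the order of summation to reach $\sum_{c_1, c_2} \sum_{b} e(b(c_1 - c_2)\alpha)$; the diagonal $c_1 = c_2$ contributes the main term, and the off-diagonal is again controlled by $\sum_{h} \min(u/h, \|h\alpha\|^{-1})$, producing after taking square roots a bound $\ll (u q^{-1/2} + u^{4/5} + u^{1/2} q^{1/2})(\log u)^{4}$ — this is where the $q^{1/2}$ and the fourth power of the logarithm enter. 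Summing the three contributions, and noting there are only $O(\log u)$ dyadic ranges to consider (which can be absorbed into the log powers, or handled by choosing the dyadic decomposition carefully), gives the claimed estimate.

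The main obstacle is the Type II estimate: one must be careful that the Cauchy--Schwarz step and the subsequent dyadic summation do not lose more than one extra logarithmic factor beyond what the elementary lemma on $\sum \min(u/h, \|h\alpha\|^{-1})$ already costs, so that the final exponent on $\log u$ is exactly $4$. The balancing of ranges — choosing $U = u^{2/5}$ so that the Type I error $u^{4/5}$, the Type II terms $u^{1/2}q^{1/2}$ and $uq^{-1/2}$, and the trivial term $U$ all fit under the stated bound — is the other point requiring care; since this is a well-known result (see, e.g., Vaughan or Iwaniec--Kowalski), I would cite the standard treatment rather than reproduce every estimate, and only indicate the choice of parameters and the shape of each of the three contributions.
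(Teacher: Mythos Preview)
Your outline is correct and is precisely the standard Vaughan-identity argument that yields this bound. The paper itself does not reproduce the proof at all; its entire proof of this lemma is a one-line citation ``See \cite[Ch.~25]{D2000}'', i.e., Davenport's \emph{Multiplicative Number Theory}, where the argument you sketch (Vaughan's decomposition with $U=u^{2/5}$, Type~I via the geometric-series/$\min$ lemma, Type~II via Cauchy--Schwarz) is carried out in full. So your proposal is not merely aligned with the paper's approach---it supplies the content the paper delegates to the reference.
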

\begin{proof}
See \cite[Ch. 25]{D2000}.
\end{proof}
Using the orthogonality relation
\[
\mathbbm{1}_{k \equiv r \bmod q_0} = \frac{1}{q_0} \sum_{\ell=0}^{q_0-1} e \bigg( \frac{ \ell(k-r)}{q_0} \bigg ),
\]
we write
\begin{align}\label{Detecting AP}
S(\alpha) = \frac{1}{q_0} \sum_{r \in \mathcal{R}_0}  \sum_{\ell=0}^{q_0-1} e \bigg( \frac{- \ell r}{q_0} \bigg ) \sum_{\substack{u \leqslant k \leqslant 2 u}} \Lambda(k) e \bigg (k \bigg(\alpha +\frac{\ell}{q_0} \bigg)\bigg).
\end{align}
To treat the innermost sum in \eqref{Detecting AP}, we will derive a variant of Lemma \ref{Vinogradov bound} to address the special case when $\alpha\in \mathbb{R}$ and there exists some reduced fraction $a/q$ such that $|\alpha-a/q|\leqslant\gamma/q^2$, for some $\gamma \geqslant 1$.
\begin{lem}\label{generalized vinogradov bound}
    Let $u\geqslant 1$, $\alpha\in \mathbb{R}$ and consider a reduced fraction $a/q$ such that $|\alpha-a/q|\leqslant\gamma/q^2$ for some $\gamma\geqslant 1$. Then
\begin{align} \label{Vinogradov with gamma}
\sum_{1 \leqslant k \leqslant u} \Lambda(k)e(k\alpha) \ll  \bigg(\frac{\gamma^{1/2}u}{q^{1/2}} +u^{4/5} +u^{1/2}q^{1/2}\bigg)(\log u)^4.
\end{align}
\end{lem}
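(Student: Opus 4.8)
The plan is to deduce Lemma~\ref{generalized vinogradov bound} from the classical Vinogradov bound (Lemma~\ref{Vinogradov bound}) by reducing to the case $\gamma=1$ via a splitting of the denominator. The point is that the hypothesis $|\alpha-a/q|\leqslant\gamma/q^2$ with $\gamma\geqslant 1$ is \emph{weaker} than the hypothesis $|\alpha-a/q|\leqslant 1/q^2$ in Lemma~\ref{Vinogradov bound}, so we cannot simply quote it. However, $|\alpha-a/q|\leqslant\gamma/q^2$ is (roughly) the same as saying $\alpha$ is within $1/(q')^2$ of a fraction with denominator $q'\asymp q/\gamma^{1/2}$, and more importantly, by Dirichlet's theorem $\alpha$ admits \emph{some} rational approximation $a'/q'$ with $(a',q')=1$, $q'\leqslant Q_0$, and $|\alpha - a'/q'|\leqslant 1/(q'Q_0)$ for any chosen $Q_0$; applying Lemma~\ref{Vinogradov bound} with this $a'/q'$ reduces everything to showing that $q'$ is neither too small nor too large.

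First I would invoke Dirichlet's approximation theorem with $Q_0 = u$ (or $u^{1/2}$, to be optimized): there is a reduced fraction $a'/q'$ with $1\leqslant q'\leqslant Q_0$ and $|\alpha - a'/q'|\leqslant 1/(q'Q_0)\leqslant 1/(q')^2$. Feeding this into Lemma~\ref{Vinogradov bound} gives
\[
\sum_{1\leqslant k\leqslant u}\Lambda(k)e(k\alpha)\ll\Bigl(\frac{u}{(q')^{1/2}}+u^{4/5}+u^{1/2}(q')^{1/2}\Bigr)(\log u)^4.
\]
So it remains to control $q'$ in terms of $q$ and $\gamma$. If $q'$ is small, the term $u/(q')^{1/2}$ could be large; if $q'$ is large, the term $u^{1/2}(q')^{1/2}$ could be large. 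The second I would handle by choosing $Q_0$ appropriately (e.g.\ $Q_0=u^{2/5}$ forces $u^{1/2}(q')^{1/2}\leqslant u^{7/10}\leqslant u^{4/5}$, absorbing it). For the first, the key step is a \textbf{lower bound on $q'$} whenever $a'/q'\neq a/q$: since both are reduced fractions, $|a'/q - a/q'|\geqslant 1$, so
\[
\frac{1}{qq'}\leqslant\Bigl|\frac{a'}{q'}-\frac{a}{q}\Bigr|\leqslant\Bigl|\frac{a'}{q'}-\alpha\Bigr|+\Bigl|\alpha-\frac{a}{q}\Bigr|\leqslant\frac{1}{(q')^2}+\frac{\gamma}{q^2}.
\]
This forces either $q'\geqslant q/2$ (so $u/(q')^{1/2}\ll u/q^{1/2}\leqslant \gamma^{1/2}u/q^{1/2}$, done), or $q\leqslant 2\gamma q'/$(something), i.e.\ $q'\gg q/\gamma$; that only gives $u/(q')^{1/2}\ll \gamma^{1/2}u/q^{1/2}$, which is exactly the bound claimed. (And if $a'/q'=a/q$, then $q'=q$ directly, but then we'd need $|\alpha-a/q|\leqslant 1/q^2$, which holds only if $\gamma$ is essentially $1$; in general this case simply won't arise once $Q_0$ is chosen so that $q$ can exceed $Q_0$, or is handled by the denominator-splitting alternative below.)

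An alternative, perhaps cleaner, route avoids Dirichlet: split $q = q_1 q_2$ in the most balanced way and observe that $\alpha$ is within $\gamma/q^2 \leqslant 1/q_1^2$ of a fraction $a/q$ which, after clearing the common factor, has denominator dividing $q$; one picks the coarser approximation $a_1/q_1$ with $q_1\asymp q/\gamma^{1/2}$ and $|\alpha - a_1/q_1|\leqslant 1/q_1^2$, then applies Lemma~\ref{Vinogradov bound} with $q_1$ in place of $q$. This immediately produces $u/q_1^{1/2}\asymp \gamma^{1/4}u/(q/\gamma^{1/2})^{1/2}\cdot$\ldots — I would need to be careful that the exponent of $\gamma$ comes out as $1/2$ and not larger, which is why the Dirichlet approach with the two-case lower bound on $q'$ is the safer write-up. \textbf{The main obstacle} is precisely this bookkeeping of the power of $\gamma$: making sure that across all cases (small $q'$, large $q'$, $q'$ comparable to $q$) the worst term is $\gamma^{1/2}u/q^{1/2}$ and never something like $\gamma u/q^{1/2}$, while also checking that the $u^{4/5}$ and $u^{1/2}q^{1/2}$ terms survive unchanged; once the case analysis on $|a'/q' - a/q|$ is set up correctly this is routine, but it is the only place where anything can go wrong.
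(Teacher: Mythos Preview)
Your approach is the same as the paper's: apply Dirichlet to obtain an auxiliary approximation $a'/q'$ satisfying the hypothesis of Lemma~\ref{Vinogradov bound}, then compare $q'$ with $q$ via $|a'/q'-a/q|\geqslant 1/(qq')$. The gap is in your case analysis on $q'$.

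In your displayed triangle inequality you feed in the weak bound $|\alpha-a'/q'|\leqslant 1/(q')^{2}$ rather than the sharp Dirichlet bound $1/(q'Q_{0})$, obtaining
\[
\frac{1}{qq'}\leqslant\frac{1}{(q')^{2}}+\frac{\gamma}{q^{2}}.
\]
This inequality is \emph{vacuous}: setting $x=q'/q$ and clearing denominators gives $\gamma x^{2}-x+1\geqslant 0$, whose discriminant $1-4\gamma$ is negative for $\gamma\geqslant 1$, so it holds for every $x>0$. Hence your dichotomy ``either $q'\geqslant q/2$ or $q'\gg q/\gamma$'' does not follow; in fact the first alternative, read off correctly, is $q'\leqslant 2q$ (an \emph{upper} bound), which is useless for controlling $u/(q')^{1/2}$.

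The paper repairs this by choosing $Q_{0}=2q$ in Dirichlet rather than a power of $u$. Then automatically $q'\leqslant 2q$, which handles $u^{1/2}(q')^{1/2}\ll u^{1/2}q^{1/2}$ with no optimisation against $u$; and the sharp bound $|\alpha-a'/q'|\leqslant 1/(2qq')$ turns the triangle inequality into
\[
\frac{1}{qq'}\leqslant\frac{1}{2qq'}+\frac{\gamma}{q^{2}},
\]
from which $q'\geqslant q/(2\gamma)$ follows at once when $a'/q'\neq a/q$ (and $q'=q$ when they coincide). Your alternative route, splitting $q=q_{1}q_{2}$ with $q_{1}\asymp q/\gamma^{1/2}$, also fails in general since $q$ need not have a divisor of that size---take $q$ prime.
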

\begin{proof}
Dirichlet's theorem on Diophantine approximation (see \cite[Lemma 2.1]{V1997}) asserts that for any real $\alpha$ and for any real number $M\geqslant 1$, there exists a rational number $a_1/q_1$, where $1 \leqslant q_1 \leqslant M$, and $(a_1,q_1)=1$, such that $\lvert \alpha-a_1/q_1 \rvert \leqslant 1/(q_1M)\leqslant1/q_1^2$. Choose $\alpha$ to be as in the statement of the lemma and $M=2q$. Then $1\leqslant q_1\leqslant 2q$, $(a_1,q_1)=1$, and
\begin{align}
    \left\lvert \alpha-\frac{a_1}{q_1} \right\rvert \leqslant \frac{1}{2qq_1}\leqslant\frac{1}{q_1^2}.\label{Dirichlet theorem}
\end{align}
By Lemma \ref{Vinogradov bound}, we have 
\begin{align}
\sum_{1 \leqslant k \leqslant u} \Lambda(k)e(k\alpha) \ll \bigg(\frac{u}{q_1^{1/2}} +u^{4/5} +u^{1/2}q_1^{1/2}\bigg)(\log u)^4.\label{vinogradov bound}
\end{align}
In the special case when $a/q=a_1/q_1$, we must have $a_1=a$ and $q_1=q$. Therefore \eqref{vinogradov bound} implies that 
     \begin{align}
\sum_{1 \leqslant k \leqslant u} \Lambda(k)e(k\alpha) \ll \bigg(\frac{u}{q^{1/2}} +u^{4/5} +u^{1/2}q^{1/2}\bigg)(\log u)^4.\label{case 1 general bound}
\end{align}
Otherwise assume that $a/q\neq a_1/q_1$. Then using \eqref{Dirichlet theorem}, we have
\begin{align}
    \frac{1}{qq_1}&\leqslant\left\lvert\frac{a_1}{q_1}-\frac{a}{q}\right\rvert\leqslant\left\lvert\frac{a_1}{q_1}-\alpha\right\rvert+\left\lvert\alpha-\frac{a}{q}\right\rvert\leqslant \frac{1}{2qq_1}+\frac{\gamma}{q^2}.\label{inequality}
\end{align}
This implies that $1/(2qq_1)\leqslant\gamma/q^2$, which is equivalent to 
\begin{align}
\frac{1}{q_1^{1/2}}\leqslant\frac{(2\gamma)^{1/2}}{q^{1/2}}.\label{inequality on q1}
\end{align}
Then using \eqref{vinogradov bound} and \eqref{inequality on q1}, we deduce that 
 \begin{align}
\sum_{1 \leqslant k \leqslant u} \Lambda(k)e(k\alpha) \ll \bigg(\frac{\gamma^{1/2} u}{q^{1/2}} +u^{4/5} +u^{1/2}q^{1/2}\bigg)(\log u)^4.\label{case 2 general bound}
\end{align}   
Therefore combining \eqref{case 1 general bound} and \eqref{case 2 general bound}, we get our desired result.
\end{proof}
Let's revert back to the sum in the right hand side of \eqref{Detecting AP} when $\alpha \in \mathfrak{m}$. By Dirichlet's theorem on Diophantine approximation (see \cite[Lemma 2.1]{V1997}), for any real $\alpha$ and for any real number $Q\geqslant 1$, there exists a rational number $a/q$ such that $\lvert \alpha-a/q \rvert \leqslant 1/(qQ), 1 \leqslant q \leqslant Q$, and $(a,q)=1$. If $q \leqslant P$, then $\alpha \in \mathfrak{M}(q,a)$; hence if $\alpha \in \mathfrak{m}$, then $P<q<Q$. Therefore given $\alpha \in \mathfrak{m}$, there exists a reduced fraction $a/q$ with $\lvert \alpha-a/q \rvert \leqslant 1/(qQ) \leqslant 1/q^2, (a,q)=1$ and $P<q \leqslant Q$. Fix $\ell \in \{1,2,\cdots, q_0-1\}$ and let
\[
\alpha_{\ell} = \alpha+\frac{\ell}{q_0}.
\]
We write
\[
\frac{a}{q}+\frac{\ell}{q_0} = \frac{aq_0+\ell q}{qq_0} = \frac{a_{\ell}}{q_{\ell}},
\]
where $(a_{\ell},q_{\ell})=1$. There is some ambiguity in this notation because of the $a$-dependence in $q_{\ell}$. This needs to be remembered in the calculations. We have
\[
\bigg \lvert \alpha_{\ell} - \frac{a_{\ell}}{q_{\ell}} \bigg \rvert \leqslant \frac{1}{q^2} \leqslant \frac{q_0^2}{q_{\ell}^2}.
\]
Therefore applying Lemma \ref{generalized vinogradov bound} with $\alpha = \alpha_{\ell}$ and $\gamma = q_0^2$, we obtain
\begin{align} \label{Vinogradov with Tilde Alpha}
\sum_{\substack{u \leqslant k \leqslant 2 u}} \Lambda(k) e \bigg (k \left(\alpha +\frac{\ell}{q_0} \right)\bigg) &\ll  \bigg(\frac{q_0 u}{q^{1/2}} +u^{4/5} +u^{1/2}q^{1/2}\bigg)(\log u)^4.
\end{align}
Since $q >(\log u)^B$ and $q_0 \asymp (\log u)^{3A}$, we arrive at
\begin{align}\label{Minor Arcs Prefinal Step 1}
\sum_{\substack{u \leqslant k \leqslant 2 u}} \Lambda(k) e \bigg (k \left(\alpha +\frac{\ell}{q_0} \right)\bigg) \ll \frac{u}{(\log u)^{\frac{B}{2}-3A-4}}.
\end{align}
Substituting \eqref{Minor Arcs Prefinal Step 1} in \eqref{Detecting AP} and noting that $\#(\mathcal{R}_0) \asymp q_0^{1/3}(\log u)^{\varepsilon}$ for any $\varepsilon>0$, it follows that
\begin{align}\label{Minor Arcs Prefinal Step 2}
S(\alpha) \ll \frac{1}{q_0} \sum_{r \in \mathcal{R}_0}  \sum_{\ell=0}^{q_0-1} \frac{u}{(\log u)^{\frac{B}{2}-3A-4}} \ll_{\varepsilon} \frac{u}{(\log u)^{\frac{B}{2}-4A-4-\varepsilon}}.
\end{align}
Now we are ready to estimate the integral over the minor arcs. We write
\begin{align}
\bigg \lvert \int_{\mathfrak{m}} S(\alpha)^3 e(-\alpha m) \, \dd   \alpha \bigg \rvert &\leqslant \left ( \max_{\alpha \in \mathfrak{m}} S(\alpha) \right) \int_{\mathfrak{m}} \lvert S(\alpha)\rvert ^2  \, \dd   \alpha \notag \\
& \leqslant \left ( \max_{\alpha \in \mathfrak{m}} S(\alpha) \right) \int_{0}^{1} \lvert S(\alpha)\rvert ^2  \, \dd   \alpha. \label{Minor Arc Integral}
\end{align}
The integral on the right hand side of \eqref{Minor Arc Integral} is
\begin{align}
\sum_{r_1, r_2 \in \mathcal{R}_0} \sum_{\substack{u \leqslant k_1,k_2 \leqslant 2 u \\ k_1 \equiv r_1 \bmod q_0 \\ k_2 \equiv r_2 \bmod q_0}} \Lambda(k_1)  \Lambda(k_2) \int_{0}^{1} e((k_1-k_2)\alpha) \, \dd   \alpha &=  \sum_{r \in \mathcal{R}_0} \sum_{\substack{u \leqslant k \leqslant 2 u \\ k \equiv r \bmod q_0}} \Lambda(k) ^2 \notag\\
&\ll_{\varepsilon} u(\log u)^{A+1+\varepsilon}. \label{Conjugate Integral Bound}
\end{align}
Substituting \eqref{Minor Arcs Prefinal Step 2} and \eqref{Conjugate Integral Bound} in \eqref{Minor Arc Integral}, we obtain
\begin{align} \label{Minor Arcs Prefinal Step 3}
\int_{\mathfrak{m}} S(\alpha)^3 e(-\alpha m) \, \dd   \alpha \ll_{\varepsilon} \frac{u^2}{(\log u)^{\frac{B}{2}-5A-5-2\varepsilon}}.
\end{align}
\section{Major Arcs} \label{Major Arcs : Sec}
In this section, we estimate the integral
\[\int_{\mathfrak{M}} S(\alpha)^3 e(-\alpha m) \, \dd   \alpha.\]
We write
\begin{align}\label{Major Arc Integral Breaking}
\int_{\mathfrak{M}} S(\alpha)^3 e(-\alpha m) \, \dd   \alpha&=\sum_{q\leqslant P}\sum\limits_{\substack{a=1 \\ (a,q)=1}}^q \int_{\mathfrak{M}(q, a)} S(\alpha)^3 e(-\alpha m) \, \dd   \alpha.
\end{align}
Fix a major arc $\mathfrak{M}(q, a)$ and let $\alpha \in \mathfrak{M}(q, a)$. There exists a reduced fraction $a/q$ with  $1 \leqslant a \leqslant q \leqslant P$ and $(a, q)=1$ such that $|\alpha-a / q| \leqslant 1/Q$. Recall from \eqref{Detecting AP}, we have
\[
S(\alpha) = \frac{1}{q_0} \sum_{r \in \mathcal{R}_0}  \sum_{\ell=0}^{q_0-1} e \left (\frac{- \ell r}{q_0} \right ) \sum_{\substack{u \leqslant k \leqslant 2 u}} \Lambda(k) e \bigg (k \left(\alpha +\frac{\ell}{q_0} \right)\bigg).
\]
Focusing on the innermost sum, we write
\begin{align}\label{Inner Sum Definition}
\tilde{S}(\alpha_{\ell}) = \sum_{\substack{u \leqslant k \leqslant 2 u}} \Lambda(k) e (k \alpha_{\ell}),
\end{align}
where \[
\alpha_{\ell} = \alpha+\frac{\ell}{q_0}, \quad \frac{a}{q}+\frac{\ell}{q_0} = \frac{a_{\ell}}{q_{\ell}} \textrm{ with } (a_{\ell}, q_{\ell})=1, \quad \textrm{and } \bigg \lvert \alpha_{\ell} - \frac{a_{\ell}}{q_{\ell}} \bigg \rvert \leqslant \frac{1}{Q}. 
\]
Let $\beta = \alpha-a/q$. Then we have
\begin{align}\label{Inner Sum Step 1}
\tilde{S}(\alpha_{\ell}) & =\sum_{\substack{u \leqslant k \leqslant 2 u \\
(k, q_{\ell})=1}} \Lambda(k) e\left(\frac{k a_{\ell}}{q_{\ell}}\right) e(k \beta)+\sum_{\substack{u \leqslant k \leqslant 2u \\
(k, q_{\ell})>1}} \Lambda(k) e\left(\frac{k a_{\ell}}{q_{\ell}}\right) e(k \beta).
\end{align}
We detect the exponential phase in \eqref{Inner Sum Step 1} using the following identity
$$
\frac{1}{\varphi(q_{\ell})} \sum_{\chi \bmod q_{\ell}} \tau(\overline{\chi}) \chi(h)= \begin{cases}e(h / q_{\ell}) & \text { if }(h, q_{\ell})=1, \\ 0 & \text { if }(h, q_{\ell})>1 ,\end{cases}
$$
where $\tau(\chi)$ is the Gauss sum. This yields
\begin{align}\label{Detecting Exponential Phase}
\tilde{S}(\alpha_{\ell}) & =\frac{1}{\varphi(q_{\ell})} \sum_{\chi \bmod q_{\ell}} \tau(\overline{\chi}) \chi(a_{\ell}) \sum_{u \leqslant k \leqslant 2 u} \Lambda(k) \chi(k) e(k \beta)+O\bigg(\sum_{p \mid q_{\ell}} \sum_{r \leqslant \log _p (2u)} \Lambda\left(p^r\right)\bigg).
\end{align}
Note that $\sum_{p \mid q_{\ell}} 1=\omega(q_{\ell}) \ll \log q_{\ell} \leqslant \log P \leqslant \log u$. Therefore the error term in \eqref{Detecting Exponential Phase} is $\ll (\log u)^2$. We focus on the first term on the right hand side of \eqref{Detecting Exponential Phase}. Regarding the inner sum over $k$, if we let
\[\psi(t,\chi) = \sum_{k \leqslant t}\chi(k)\Lambda(k),
\]
then by partial summation, we have
\begin{align}\label{Inner Sum Step 2}
\sum_{u \leqslant k \leqslant 2 u} \Lambda(k) \chi(k) e(k \beta) = e(2u \beta) \psi(2u, \chi)-e(u \beta) \psi(u, \chi)-2 \pi i \beta \int_u^{2u} e(t \beta) \psi(t, \chi) \, \dd t.
\end{align}
Applying the Siegel--Walfisz theorem (see \cite[Ch. 22]{D2000}), when $\chi \neq \chi_0$, it follows that
\begin{align}\label{Non-Principal Characters}
\sum_{u \leqslant k \leqslant 2 u} \Lambda(k) \chi(k) e(k \beta) \ll (1+2\lvert \beta \rvert u) u \exp(-c \sqrt{\log u}),
\end{align}
for some absolute constant $c>0$. When $\chi = \chi_0$, let $\psi\left(t, \chi_0\right)=[t]+R(t)$ and
\begin{align}\label{T Beta Definition}
T(\beta)=\sum_{u \leqslant k \leqslant 2u} e(k \beta).
\end{align}
By partial summation,
\begin{align}\label{T Beta Estimate}
T(\beta)=[2u]e(2u \beta)-[u]e(u\beta)-2 \pi i \beta \int_u^{2u} e(t \beta)[t] \, \dd   t .
\end{align}
Then we have
\begin{align}\label{Principal Character}
\sum_{u \leqslant k \leqslant 2u}& \chi_0(k) \Lambda(k) e(k \beta) \notag \\
& =T(\beta)+\left(e(2u \beta) \psi(2u, \chi_0)-e(u \beta) \psi(u, \chi_0)-2 \pi i \beta \int_u^{2u} e(t \beta) \psi(t, \chi_0) \, \dd   t - T(\beta)\right) \notag  \\
&=T(\beta)+e(2u \beta) R(2u)-e(u \beta) R(u)-2 \pi i \beta \int_u^{2u} e(t \beta) R(t) \, \dd   t \notag  \\
& =T(\beta)+O\bigg((1+|\beta| u) u \exp (-c \sqrt{\log u})\bigg).
\end{align}
We have $\tau\left(\chi_0\right)=\mu(q_{\ell})$ and  $|\tau(\chi)| \leqslant q_{\ell}^{1/2}$. Substituting \eqref{Non-Principal Characters} and \eqref{Principal Character} into \eqref{Detecting Exponential Phase}, we obtain
\begin{align*}
\tilde{S}(\alpha_{\ell})=\frac{\mu(q_{\ell})}{\varphi(q_{\ell})} T(\beta)+O\left(q_{\ell}^{1/2}(1+|\beta| u) u \exp (-c \sqrt{\log u})\right) .
\end{align*}
But $q_{\ell} \leqslant qq_0$ and $|\beta| \leqslant 1 / Q$ for $\alpha \in \mathfrak{M}(q, a)$, so we arrive at 
\begin{align}\label{Inner Sum Final Estimate}
\tilde{S}(\alpha_{\ell})=\frac{\mu(q_{\ell})}{\varphi(q_{\ell})} T(\beta)+O\left(u \exp \left(-c_1 \sqrt{\log u}\right)\right),
\end{align}
for some absolute constant $c_1>0$. Taking cubes of both sides of \eqref{Detecting AP}, we write
\begin{align}\label{Taking Cubes}
S(\alpha)^3 &= \frac{1}{q_0^3} \underset{\substack{r_1,r_2,r_3 \in \mathcal{R}_0}}{\sum \sum \sum} \underset{\substack{0\leqslant \ell_1,\ell_2,\ell_3 \leqslant q_0-1}}{\sum \sum \sum}  e \left (\frac{-( \ell_1 r_1+\ell_2r_2+\ell_3r_3)}{q_0} \right ) \tilde{S}(\alpha_{\ell_1}) \tilde{S}(\alpha_{\ell_2}) \tilde{S}(\alpha_{\ell_3}).
\end{align}
From \eqref{Inner Sum Final Estimate}, it follows that
\begin{align}\label{Product of Inner Sum}
\tilde{S}(\alpha_{\ell_1}) \tilde{S}(\alpha_{\ell_2}) \tilde{S}(\alpha_{\ell_3}) = \frac{\mu(q_{\ell_1}) \mu(q_{\ell_2}) \mu(q_{\ell_3})}{\varphi(q_{\ell_1}) \varphi(q_{\ell_2}) \varphi(q_{\ell_3})} T(\beta)^3+O\left(u^3 \exp \left(-c_1 \sqrt{\log u}\right)\right).
\end{align}
Therefore the contribution to the integral in \eqref{Major Arc Integral Breaking} from $\mathfrak{M}(q,a)$ is
\begin{align*}
\frac{1}{q_0^3} \sum_{r_i} \sum_{\ell_i} & e \left (\frac{-( \ell_1 r_1+\ell_2r_2+\ell_3r_3)}{q_0} \right ) \frac{\mu(q_{\ell_1}) \mu(q_{\ell_2}) \mu(q_{\ell_3})}{\varphi(q_{\ell_1}) \varphi(q_{\ell_2}) \varphi(q_{\ell_3})} e\left( -\frac{am}{q}\right) \int_{-1/Q}^{1/Q} T(\beta)^3e(-m\beta) \, \dd  \beta \\
&+O\left(u^2 \exp \left(-c_2 \sqrt{\log u}\right)\right),
\end{align*}
for some absolute constant $c_1>0$, where we have suppressed some notations in the summation for brevity. Summing over all the major arcs $\mathfrak{M}(q,a)$, we see that
\begin{align}
\int_{\mathfrak{M}}S(\alpha)^3 e(-m \alpha) \, d   \alpha \notag  &= \frac{1}{q_0^3} \sum_{r_i} \sum_{\ell_i}e \left (\frac{-( \ell_1 r_1+\ell_2r_2+\ell_3r_3)}{q_0} \right ) \\
&\quad \times \sum_{q \leqslant P} \sum_{\substack{1\leqslant a \leqslant q \\ (a,q)=1 }} \frac{\mu(q_{\ell_1}) \mu(q_{\ell_2}) \mu(q_{\ell_3})}{\varphi(q_{\ell_1}) \varphi(q_{\ell_2}) \varphi(q_{\ell_3})} e\left( -\frac{am}{q}\right)  \int_{-1 / Q}^{1 / Q} T(\beta)^3 e(-m \beta)\, \dd   \beta \notag \\
&\quad+O\left(u^2 \exp \left(-c_3 \sqrt{\log u}\right)\right). \label{Major Arc Contributions}
\end{align}
We now focus on the first term on the right hand side of \eqref{Major Arc Contributions}. We begin by evaluating the inside integral over $\beta$. Noting that the sum $T(\beta)$ is a geometric series, we obtain 
$$
T(\beta) = \frac{e(([2u]+1) \beta)-e(([u]+1) \beta)}{e(\beta)-1} \ll \min \left(u,\|\beta\|^{-1}\right) .
$$
Hence it follows that
$$
\int_{1 / Q}^{1-1 / Q}|T(\beta)|^3 \, \dd   \beta=O\bigg(\int_{1 / Q}^{1 / 2} \beta^{-3} \, \dd   \beta\bigg)=O\left(Q^2\right)=O\left(u^2(\log u)^{-2 B}\right).
$$
This implies that
\begin{align}\label{Extending the Range of Integral}
\int_{-1 / Q}^{1 / Q} T(\beta)^3 e(-m \beta) \, \dd   \beta=\int_0^1 T(\beta)^3 e(-m \beta) \, \dd   \beta+O\left(u^2(\log u)^{-2 B}\right) .
\end{align}
The integral on the right hand side of \eqref{Extending the Range of Integral} equals
\begin{align}\label{Removing Primes}
\int_0^1\bigg(\sum_{u \leqslant k_1, k_2, k_3 \leqslant 2u} e\left(\left(k_1+k_2+k_3\right) \beta\right)\bigg) e(-m \beta) \, \dd   \beta=\sum_{\substack{u \leqslant k_1, k_2, k_3 \leqslant 2u \\ k_1+k_2+k_3=m}} 1,
\end{align}
that is, the number of ways of writing $m$ in the form $m=k_1+k_2+k_3$ with positive integers $k_1, k_2, k_3 \in [u,2u]$. Hence the left hand side of \eqref{Removing Primes} is equal to $\mathcal{H}(m,u)+O(u)$, where
\begin{align}\label{H(m,u) definition}
\mathcal{H}(m,u) = u^2-\frac{1}{2}\bigg( (m-4u)^2+(5u-m)^2 \bigg).
\end{align}
Substituting this in \eqref{Extending the Range of Integral}, we obtain
\begin{align}\label{T_beta Estimate}
\int_{-1 / Q}^{1 / Q} T(\beta)^3 e(-m \beta) d \beta=\mathcal{H}(m,u)+O\left(u^2(\log u)^{-2 B}\right) .
\end{align}
Substituting \eqref{T_beta Estimate} in \eqref{Major Arc Contributions}, we arrive at
\begin{align}\label{Major Arcs Error Term Step 0}
\int_{\mathfrak{M}}S(\alpha)^3 e(-m \alpha) \, \dd   \alpha & = \frac{\mathcal{H}(m,u)}{q_0^3} \sum_{r_i} \sum_{\ell_i} e \left (\frac{-( \ell_1 r_1+\ell_2r_2+\ell_3r_3)}{q_0} \right )\notag \\
&\quad \times \sum_{q \leqslant P} \sum_{\substack{1\leqslant a \leqslant q \\ (a,q)=1 }} \frac{\mu(q_{\ell_1}) \mu(q_{\ell_2}) \mu(q_{\ell_3})}{\varphi(q_{\ell_1}) \varphi(q_{\ell_2}) \varphi(q_{\ell_3})}  e\left( -\frac{am}{q}\right) \notag \\
& \quad+O\bigg(\frac{u^2}{(\log u)^{9A+2B}} \sum_{r_i} \sum_{\ell_i} \sum_{q \leqslant P} \sum_{\substack{1\leqslant a \leqslant q \\ (a,q)=1 }} \frac{1}{\varphi(q_{\ell_1}) \varphi(q_{\ell_2}) \varphi(q_{\ell_3})}\bigg).
\end{align}
To bound the error terms, we study the sum over the $\ell_i$'s. Fix $q$ and assume $q_{\ell_i} \geqslant 2$ for $i=1,2,3$. Using the estimate that $\varphi(q_{\ell_i}) \gg q_{\ell_i} / \log q_{\ell_i}$ for all $q_{\ell_i} \geqslant 2$, we see that
\begin{align}\label{Bounding Error Terms I}
\sum_{q \leqslant P} \sum_{\substack{1\leqslant a \leqslant q \\ (a,q)=1 }} \underset{\substack{0\leqslant \ell_1,\ell_2,\ell_3 \leqslant q_0-1}}{\sum \sum \sum} \frac{1}{\varphi(q_{\ell_1}) \varphi(q_{\ell_2}) \varphi(q_{\ell_3})} \ll_{\varepsilon} (\log \log u)^{3+\varepsilon} \sum_{q \leqslant P} \sum_{\substack{1\leqslant a \leqslant q \\ (a,q)=1 }} \bigg ( \sum_{\ell =0}^{q_0-1} \frac{1}{q_{\ell}}\bigg )^3.
\end{align}
Let $d = (q,q_0)$ and write $q=d\tilde{q}, q_0 = d \tilde{q_0}$ with $(\tilde{q}, \tilde{q_0})=1$. Then
\begin{align}
\sum_{\ell =0}^{q_0-1} \frac{1}{q_{\ell}} &= \frac{1}{qq_0}\sum_{\ell =0}^{q_0-1} (aq_0+\ell q, qq_0) \notag \\
&= \frac{(q,q_0)}{qq_0}\sum_{\ell =0}^{q_0-1} (a\tilde{q_0}+\ell \tilde{q}, d\tilde{q}\tilde{q_0}) \leqslant \frac{(q,q_0)}{qq_0}\sum_{\ell =0}^{q_0-1} d \tilde{q_0} \leqslant \frac{q_0^2}{q}. \label{GCD Sum}
\end{align}
Substituting \eqref{GCD Sum} in \eqref{Bounding Error Terms I}, we obtain
\begin{align}\label{Bounding Error Terms II}
\sum_{q \leqslant P} \sum_{\substack{1\leqslant a \leqslant q \\ (a,q)=1 }} \underset{\substack{0\leqslant \ell_1,\ell_2,\ell_3 \leqslant q_0-1}}{\sum \sum \sum} \frac{1}{\varphi(q_{\ell_1}) \varphi(q_{\ell_2}) \varphi(q_{\ell_3})} &\ll_{\varepsilon} (\log u)^{18A} (\log \log u)^{3+\varepsilon} \sum_{q \leqslant P} \frac{1}{q^2} \notag \\
&\ll_{\varepsilon} (\log u)^{18A}(\log \log u)^{3+\varepsilon},
\end{align}
where the sum is over those tuples $(\ell_1,\ell_2,\ell_3)$ for which $q_{\ell_i} \geqslant 2$ for $i=1,2,3$. Suppose without loss of generality, $q_{\ell_1}=1$. This implies $q$ must divide $q_0$. Therefore the number of possible choices for $q$ is $\ll_{\varepsilon} (\log u)^{\varepsilon}$ for any $\varepsilon>0$. Hence combining this argument and from \eqref{Bounding Error Terms II}, we see that the error term in \eqref{Major Arcs Error Term Step 0} is 
\[\ll_{\varepsilon} u^2(\log u)^{12A-B+\varepsilon}.\]
Hence it remains to study the expression $\mathcal{H}(m,u)\mathfrak{S}(m,q_0, \mathcal{R}_0)$, where $\mathfrak{S}(m,q_0, \mathcal{R}_0)$ is the arithmetic factor given by
\begin{align}
\mathfrak{S}(m,q_0, \mathcal{R}_0) = \frac{1}{q_0^3} \sum_{q \leqslant P} \sum_{\substack{1\leqslant a \leqslant q \\ (a,q)=1 }} e\left( -\frac{am}{q}\right) \sum_{r_i} \sum_{\ell_i} e \left (\frac{-( \ell_1 r_1+\ell_2r_2+\ell_3r_3)}{q_0} \right )  \frac{\mu(q_{\ell_1}) \mu(q_{\ell_2}) \mu(q_{\ell_3})}{\varphi(q_{\ell_1}) \varphi(q_{\ell_2}) \varphi(q_{\ell_3})}.\label{arithmetic factor}
\end{align}
Our goal across the next two sections is to show that $\mathfrak{S}(m,q_0, \mathcal{R}_0) \neq 0$ when $m$ is odd.
\section{Arithmetic Factor: Initial Decomposition}\label{sec: The arithmetic factor}
\subsection{Initial Steps} Let us define
\begin{align}\label{Defining f}
f(q) := \frac{1}{q_0^3}\sum_{\substack{1\leqslant a \leqslant q \\ (a,q)=1 }} e\left( -\frac{am}{q}\right) \sum_{r_i} \sum_{\ell_i} e \left (\frac{-( \ell_1 r_1+\ell_2r_2+\ell_3r_3)}{q_0} \right )  \frac{\mu(q_{\ell_1}) \mu(q_{\ell_2}) \mu(q_{\ell_3})}{\varphi(q_{\ell_1}) \varphi(q_{\ell_2}) \varphi(q_{\ell_3})}.
\end{align}
Note that if $q>P$ with $B$ chosen sufficiently large in terms of $A$, then $q$ doesn't divide $q_0$. Hence $q_{\ell}$ is never equal to 1. Therefore again using \eqref{GCD Sum}, it follows that
\begin{align}
\sum_{q >P} \lvert f(q) \rvert \ll \frac{1}{(\log u)^{B-27A-1}}.\label{convergence of arithmetic factor}
\end{align}
Thus we deduce that 
\[
\mathfrak{S}(m,q_0, \mathcal{R}_0) = \sum_{q=1}^{\infty}f(q)+O((\log u)^{12A-B+\varepsilon}).
\] 
\subsection{Arithmetic Arguments involving GCDs} For any $q \in \mathbb{N}$, we can decompose $q$ uniquely as
\begin{align}\label{q Decomposition 1}
q = \tilde{q}\hat{q},
\end{align}
where 
\begin{align}\label{q Decomposition 2}
\tilde{q} = \prod_{\substack{p|q_0\\p^{v}||q}} p^{v} \text{ and  } \hat{q} = \frac{q}{\tilde{q}}.
\end{align}
Note that $(\hat{q},q_0)=1$ and $(\hat{q},\tilde{q})=1$. Let $d=(\tilde{q},q_0)$. Recall that $q_0=p_1p_2\cdots p_k$, where $p_i$ are distinct primes $<z$. Therefore it follows that
\begin{align}\label{GCD Formula}
d = \prod_{\substack{p|q_0, p|\tilde{q}}} p.
\end{align}
Let us write $\tilde{q} = \prod_{i \in \mathcal{I}} p_i^{v_i}$ where $\mathcal{I} \subseteq \{1,2,\dots,k\}$ and $v_i\geqslant 1$. Then $d = \prod_{i \in \mathcal{I}} p_i$ and 
\begin{align}
q_\ell &= \frac{qq_0}{(qq_0,aq_0+\ell q)} = \frac{\tilde{q}\hat{q}q_0}{(\tilde{q}\hat{q}q_0,aq_0+\ell\tilde{q}\hat{q})}=\frac{\tilde{q}\hat{q}q_0}{(\tilde{q}q_0,aq_0+\ell\tilde{q}\hat{q})}=\hat{q}\frac{\tilde{q}q_0}{(\tilde{q}q_0,aq_0+\ell\hat{q}\tilde{q})}.\label{ql breakdown}
\end{align}
Moreover, we have 
\[
\bigg(\hat{q},\frac{\tilde{q}q_0}{(\tilde{q}q_0,aq_0+\ell\hat{q}\tilde{q})}\bigg)=1.
\]
Using \eqref{ql breakdown}, we may rewrite $f(q)$ as
\begin{align*}
& f(q) = \frac{1}{q_0^3}\sum_{\substack{1\leqslant a \leqslant \tilde{q}\hat{q} \\ (a,\tilde{q}\hat{q})=1 }} e\bigg( -\frac{am}{\tilde{q}\hat{q}}\bigg) \sum_{r_i} \sum_{\ell_i} e \bigg (\frac{-( \ell_1 r_1+\ell_2r_2+\ell_3r_3)}{q_0} \bigg ) \frac{\mu(\hat{q})}{\varphi(\hat{q})^3} \prod_{i=1}^3 \frac{\mu}{\varphi} \bigg(\frac{\tilde{q}q_0}{(\tilde{q}q_0,aq_0+\ell_i\hat{q}\tilde{q})}\bigg).
\end{align*}
Since $\ell_i$ runs through the complete residue class modulo $q_0$, $\ell_i\hat{q}$ also runs through the complete residue class modulo $q_0$. This implies that
\begin{align}
f(q)&=\frac{1}{q_0^3}\frac{\mu(\hat{q})}{\varphi(\hat{q})^{3}}\sum_{\substack{1\leqslant a \leqslant \tilde{q}\hat{q} \notag\\ (a,\tilde{q})=1\\(a,\hat{q})=1 }} e\bigg( -\frac{am}{\tilde{q}\hat{q}}\bigg)
\sum_{r_i} \sum_{\ell_i} e \bigg (\frac{-( \ell_1 r_1+\ell_2r_2+\ell_3r_3)}{\hat{q}q_0} \bigg ) \prod_{i=1}^3 \frac{\mu}{\varphi} \bigg(\frac{\tilde{q}q_0}{(\tilde{q}q_0,aq_0+\ell_i\tilde{q})}\bigg) \notag\\
&=\frac{1}{q_0^3}\frac{\mu(\hat{q})}{\varphi(\hat{q})^{3}}\sum_{\substack{1\leqslant a \leqslant \tilde{q}\hat{q} \\ (a,\tilde{q})=1\\(a,\hat{q})=1 }} e\bigg( -\frac{am}{\tilde{q}\hat{q}}\bigg)
\bigg(\sum_{\ell=0}^{q_0-1} \frac{\mu}{\varphi} \bigg(\frac{\tilde{q}q_0}{(\tilde{q}q_0,aq_0+\ell\tilde{q})}\bigg) \sum_{r\in \mathcal{R}_0} e \left (\frac{- \ell r}{\hat{q}q_0} \right ) \bigg)^3. \label{f(q) arithmetic factor}
\end{align}
Now let us examine the M\"{o}bius factor in the inner sum in \eqref{f(q) arithmetic factor}. We write
\begin{align}
   \mu\bigg(\frac{\tilde{q}q_0}{(\tilde{q}q_0,aq_0+\ell\tilde{q})}\bigg)&=\mu \bigg(\frac{\tilde{q}q_0/d}{(\tilde{q}q_0/d,aq_0/d+\ell \tilde{q}/d}\bigg),\label{fraction}
\end{align}
which is nonzero if and only if 
$\tilde{q}/d$ divides $a$. Since $(a,q)=1$, we have $(a,\tilde{q})=1$. So the only way for this to happen is when $d = \tilde{q}$, which means that $\tilde{q}$ divides $q_0$. We now evaluate $f(q)$ in two cases. \\

\noindent \textbf{Case 1: $\tilde{q}$ does not divides $q_0$.} By the discussion above, this case will force the the M\"{o}bius factor in the inner sum in \eqref{f(q) arithmetic factor} to be zero for all $a$. Then $f(q)=0$.\\

\noindent \textbf{Case 2: $\tilde{q}$ divides $q_0$.} In this case $d = \tilde{q}$. Then we have 
\begin{align}
     \frac{\tilde{q}q_0}{(\tilde{q}q_0,aq_0+\ell\tilde{q})}=\frac{q_0}{(q_0,aq_0/\tilde{q}+\ell)}.\label{case 2 breakdown of factors}
\end{align}
Substituting \eqref{case 2 breakdown of factors} into the representation of $f(q)$ in \eqref{f(q) arithmetic factor}, we get
\begin{align}
f(q)& = \frac{1}{q_0^3}\frac{\mu(\hat{q})}{\varphi(\hat{q})^{3}}\sum_{\substack{1\leqslant a \leqslant \tilde{q}\hat{q} \\ (a,\tilde{q})=1\\(a,\hat{q})=1 }} e\bigg( -\frac{am}{\tilde{q}\hat{q}}\bigg)
\bigg(\sum_{\ell=0}^{q_0-1} \frac{\mu}{\varphi} \bigg(\frac{q_0}{(q_0,aq_0/\tilde{q}+\ell)}\bigg)\sum_{r\in \mathcal{R}_0} e \bigg (\frac{- \ell r}{\hat{q}q_0} \bigg ) \bigg)^3. \label{f(q) case 2}
\end{align}
Using \eqref{f(q) case 2}, we can evaluate $f$ at $\tilde{q}$. We see that if $\tilde{q}$ divides $q_0$ then
\begin{align*}
f(\tilde{q}) &=\frac{1}{q_0^3}\sum_{\substack{1\leqslant a \leqslant \tilde{q} \\ (a,\tilde{q})=1 }} e\bigg( -\frac{am}{\tilde{q}}\bigg) \bigg( \sum_{\ell=0}^{q_0-1} \frac{\mu}{\varphi} \bigg(\frac{q_0}{(q_0,aq_0/\tilde{q}+\ell)}\bigg)\sum_{r\in \mathcal{R}_0}e \bigg (\frac{-\ell r}{q_0} \bigg ) \bigg)^3.
\end{align*}
Similarly, we get
\begin{align}
f(\hat{q}) =\frac{1}{q_0^3}\frac{\mu(\hat{q})}{\varphi(\hat{q})^{3}}\sum_{\substack{1\leqslant a \leqslant \hat{q} \\ (a,\hat{q})=1 }} e\bigg( -\frac{am}{\hat{q}}\bigg)
\bigg(\sum_{r\in \mathcal{R}_0} \sum_{\ell=0}^{q_0-1} \frac{\mu}{\varphi} \bigg(\frac{q_0}{(q_0,\ell)}\bigg) e \bigg (\frac{- \ell r}{q_0} \bigg ) \bigg)^3.\label{f(q hat)}
\end{align}
Moreover, we also have
\begin{align}
    f(1) &=\frac{1}{q_0^3}\bigg(\sum_{\ell=0}^{q_0-1} \frac{\mu}{\varphi} \bigg(\frac{q_0}{(q_0,\ell)}\bigg)\sum_{r\in \mathcal{R}_0}e\left(\frac{-\ell r}{q_0}\right)\bigg)^3.\label{f(1) equation}
\end{align}
\subsection{Decomposition of $f$ using Dirichlet Convolution} Our objective here is to decompose $f$ as 
\[
f(q) = (g*h)(q).
\]
where $g$ and $h$ are arithmetic functions. Furthermore if $\sum_{v=1}^\infty h(v)<\infty$, then by \eqref{convergence of arithmetic factor}, it follows that 
\begin{align}\label{Convergence of Dirichlet Factors}
\sum_{q=1}^\infty f(q) = \bigg(\sum_{w=1}^\infty  g(w)\bigg)\bigg(\sum_{v=1}^\infty h(v)\bigg). 
\end{align}
Let us construct our functions $g$ and $h$. We define $g$ as follows:
\begin{align}
g(w) = 
    \begin{cases}
        0\quad & \text{if }w\nmid q_0,\\
        \frac{1}{q_0^3}\sum_{\substack{1\leqslant a \leqslant w \\ (a,w)=1 }} e\left( -\frac{am}{w}\right) \bigg( \sum_{\ell=0}^{q_0-1} \frac{\mu}{\varphi} (\frac{q_0}{(q_0,aq_0/w+\ell)})\sum_{r\in \mathcal{R}_0}e (\frac{-\ell r}{q_0}) \bigg)^3 \quad &\text{else}.
    \end{cases}\label{g(w)}
\end{align}
Let $h$ be defined as follows:
\begin{align}
h(v) = \begin{cases}
        0\quad & \text{if }(v,q_0)>1,\\
        \frac{\mu(v)}{\varphi(v)^3}\sum_{\substack{1\leqslant a \leqslant v \\ (a,v)=1 }} e\left( -\frac{am}{v}\right) \quad &\text{else}.
    \end{cases} \label{h(v)}
\end{align}
By the discussion in \cite[Ch. 25]{D2000}, $h$ is multiplicative and $\sum_{v=1}^\infty h(v)$ converges absolutely. Thus we get
\begin{align}
\sum_{v=1}^\infty h(v) &= \prod_{p\text{ prime}}\sum_{j=0}^\infty h(p^j) \notag\\
    &=\prod_{\substack{p\text{ prime}\\p|q_0}}(\sum_{j=0}^\infty h(p^j))\prod_{\substack{p\text{ prime}\\p\nmid q_0}}(\sum_{j=0}^\infty h(p^j)) \notag \\
    &=\prod_{\substack{p\nmid q_0\\p|m}}\left(1-\frac{1}{(p-1)^2}\right)\prod_{\substack{p\nmid q_0\\p\nmid m}}\left(1+\frac{1}{(p-1)^3}\right). \label{Euler Product for h}
\end{align}
We write
\begin{align}
(g*h)(q)= \sum_{c|q} g(c)h\left(\frac{q}{c}\right ) =\sum_{\substack{c|q, c|q_0\\(q/c,q_0)=1}}g(c)h\left(\frac{q}{c}\right )=g(d)h(q/d),\label{f(q)=g(c)h(q/c)}
\end{align}
where $d=(\tilde{q},q_0)$. If $\tilde{q}$ doesn't divide $q_0$, then $d \neq \tilde{q}$ which implies that $h(q/d)=0$. Therefore we have $f(q)=(g*h)(q)$ in this case. If $\tilde{q}$ divides $q_0$, then $d= \tilde{q}$ which implies that
\begin{align}\label{f as a Convolution}
(g*h)(q) &= g(\tilde{q})h(\hat{q})\notag \\
& = \frac{1}{q_0^3}\frac{\mu(\hat{q})}{\varphi(\hat{q})^{3}}\sum_{\substack{1\leqslant a_1 \leqslant \tilde{q} \\ 1\leqslant a_2 \leqslant \hat{q} \\ (a_1,\tilde{q})=1  \\ (a_2,\hat{q})=1 }} e\bigg( -\frac{a_1m}{\tilde{q}}-\frac{a_2m}{\hat{q}}\bigg)
\bigg(\sum_{\ell=0}^{q_0-1} \frac{\mu}{\varphi} \bigg(\frac{q_0}{(q_0,a_1q_0/\tilde{q}+\ell)}\bigg)\sum_{r\in \mathcal{R}_0} e \bigg (\frac{- \ell r}{\hat{q}q_0} \bigg ) \bigg)^3 \notag \\
&= \frac{1}{q_0^3}\frac{\mu(\hat{q})}{\varphi(\hat{q})^{3}}\sum_{\substack{1\leqslant a \leqslant \tilde{q}\hat{q} \\ (a,\tilde{q})=1\\(a,\hat{q})=1 }} e\bigg( -\frac{am}{\tilde{q}\hat{q}}\bigg)
\bigg(\sum_{\ell=0}^{q_0-1} \frac{\mu}{\varphi} \bigg(\frac{q_0}{(q_0,aq_0/\tilde{q}+\ell)}\bigg)\sum_{r\in \mathcal{R}_0} e \bigg (\frac{- \ell r}{\hat{q}q_0} \bigg ) \bigg)^3 =f(q).
\end{align}
Therefore it follows that $f=g*h$. By \eqref{Euler Product for h}, we already have 
\[
 \sum_{v=1}^\infty h(v) > 0
\]
for any odd integer $m$. Hence it suffices to study the function $g$ which we do in the following section.
\section{Arithmetic Factor: Further Decomposition}\label{sec: Arithmetic factor II}
\subsection{Initial Steps} Recall that in \eqref{g(w)}, we defined $g(w)=0$ if $w\nmid q_0$. We begin by writing
\begin{align}\label{Define G}
G(q_0) := \sum_{w=1}^\infty g(w) = \frac{1}{q_0^3}\sum_{w|q_0} \sum_{\substack{1\leqslant a \leqslant w \\ (a,w)=1 }} e\bigg(-\frac{am}{w}\bigg) \bigg( \sum_{\ell=0}^{q_0-1} \frac{\mu}{\varphi} \bigg(\frac{q_0}{(q_0,aq_0/w+\ell)}\bigg)\sum_{r\in \mathcal{R}_0}e \bigg (\frac{-\ell r}{q_0} \bigg ) \bigg)^3.
\end{align}
Our objective is to show that $G(q_0)>0$. We denote $g(w)$ by $g(w,q_0)$ to show the dependence on $q_0$. Expanding the expression for $g(w,q_0)$, we obtain
\begin{align}
g(w,q_0) &= \frac{1}{q_0^3} \sum_{\substack{1\leqslant a \leqslant w \\ (a,w)=1 }} e\left( -\frac{am}{w}\right) \underset{r_1, r_2,r_3\in \mathcal{R}_0}{\sum \sum \sum} \underset{0 \leqslant \ell_1, \ell_2, \ell_3 \leqslant q_0-1}{\sum \sum \sum} e \left (\frac{-\ell_1 r_1}{q_0} \right )e \left (\frac{-\ell_2 r_2}{q_0} \right )e \left (\frac{-\ell_3 r_3}{q_0} \right )\notag\\
&\quad \times \frac{\mu}{\varphi} \bigg(\frac{q_0}{(q_0,aq_0/w+\ell_1)}\bigg) \frac{\mu}{\varphi} \bigg(\frac{q_0}{(q_0,aq_0/w+\ell_2)}\bigg) \frac{\mu}{\varphi} \bigg(\frac{q_0}{(q_0,aq_0/w+\ell_3)}\bigg).\label{g(w,q_0)}
\end{align}
Recall that $q_0 = p_1p_2\cdots p_k$. For $1 \leqslant i \leqslant k$, define
\begin{align*}
u_i\equiv
    \begin{cases}
        1 \mod p_i\\
        0 \mod p_j \quad\text{ for $j\neq i$}. 
\end{cases}
\end{align*}
Then by the Chinese remainder theorem, for $i=1,2,3$, and $j=1,2,\dots, k$, there exist unique $a_j,$ $\ell_{ij}\in \Z,$ and $r_{ij}\in \mathcal{R}_j$ such that
\begin{align}
\ell_{i} &= \ell_{i1} u_1+\ell_{i2} u_2+\cdots +\ell_{ik} u_k,\notag \\
a &= a_1u_1+a_2u_2+\cdots+a_ku_k, \notag\\
r_i &= r_{i1}u_1+r_{i2}u_2+\cdots+r_{ik}u_k.\label{base decomposition}
\end{align}
 Note that 
 \begin{align}
     \ell_i&\equiv \ell_{i1}\bmod p_1,\notag\\
     \ell_i&\equiv \ell_{i2}\bmod p_2,\notag\\
     &\vdots\notag\\
     \ell_i&\equiv \ell_{ik}\bmod p_k,\label{base decomposition modulos}
 \end{align}
for $i=1,2,3$. An analogous set of equations hold for $r_i$ as well. For $b\in \mathbb{N}$, define 
\begin{align}\label{Combinatorial Function}
\mathcal{C}(b) := \sum_{t_1=0}^3\cdots\sum_{t_b=0}^3\frac{1}{\varphi(p_1)^{t_1}\cdots \varphi(p_b)^{t_b}} \dbinom{3}{t_1}\dbinom{3}{t_2} \cdots \dbinom{3}{t_b}.
\end{align}
We prove the following auxiliary lemma. 
\begin{lem}\label{lemma for c(k)}
For any $b\in \mathbb{N}$, we have
\begin{align}\label{Formula for c(k)}
\mathcal{C}(b) = \frac{(p_1p_2\cdots p_b)^3}{\varphi(p_1p_2\cdots p_b)^3}.
\end{align}
\end{lem}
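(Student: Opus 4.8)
The plan is to observe that the $b$-fold sum defining $\mathcal{C}(b)$ separates completely across the summation variables $t_1,\dots,t_b$. First I would note that in \eqref{Combinatorial Function} the summand is a product of factors each depending on a single index $t_i$, and the range $\{0,1,2,3\}$ of each $t_i$ is independent of the others; hence the multiple sum factorizes as
\[
\mathcal{C}(b) = \prod_{i=1}^{b}\left(\sum_{t=0}^{3}\binom{3}{t}\frac{1}{\varphi(p_i)^{t}}\right).
\]

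Next I would evaluate each inner sum by the binomial theorem $\sum_{t=0}^{3}\binom{3}{t}x^{t} = (1+x)^{3}$, applied with $x = 1/\varphi(p_i)$, which gives
\[
\sum_{t=0}^{3}\binom{3}{t}\frac{1}{\varphi(p_i)^{t}} = \left(1+\frac{1}{\varphi(p_i)}\right)^{3} = \left(\frac{\varphi(p_i)+1}{\varphi(p_i)}\right)^{3}.
\]
Since each $p_i$ is prime, $\varphi(p_i) = p_i-1$, so $\varphi(p_i)+1 = p_i$ and the inner sum equals $p_i^{3}/\varphi(p_i)^{3}$.

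Finally I would multiply these evaluations over $1 \leqslant i \leqslant b$ and invoke the multiplicativity of $\varphi$ together with the fact that $p_1,\dots,p_b$ are distinct primes, so that $\varphi(p_1\cdots p_b) = \varphi(p_1)\cdots\varphi(p_b)$, obtaining
\[
\mathcal{C}(b) = \prod_{i=1}^{b}\frac{p_i^{3}}{\varphi(p_i)^{3}} = \frac{(p_1p_2\cdots p_b)^{3}}{\varphi(p_1p_2\cdots p_b)^{3}},
\]
as claimed. There is no genuine obstacle in this lemma; the only point requiring a moment's care is confirming that the multiple sum truly factorizes, which it does precisely because both the summand and the summation ranges are ``diagonal'' in the variables $t_i$. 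Alternatively, one could prove \eqref{Formula for c(k)} by induction on $b$, peeling off the innermost sum over $t_b$ at each step, but the direct factorization argument above is the cleanest route.
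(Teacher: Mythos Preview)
Your proof is correct. The paper establishes the same identity by induction on $b$, computing the base case $\mathcal{C}(1)$ term by term and then peeling off the innermost sum at the inductive step---precisely the alternative you mention at the end---so the underlying observation (separability of the summand) is identical, though your direct factorization via the binomial theorem is the cleaner presentation.
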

\begin{proof}
We will prove it by induction. For the base case, when $b=1$, we have
\begin{align*}
\mathcal{C}(1)&=\sum_{t_1=0}^3\frac{1}{(p_1-1)^{t_1}}\dbinom{3}{t_1}=1+\frac{3}{p_1-1}+\frac{3}{(p_1-1)^2}+\frac{1}{(p_1-1)^3}=\frac{p_1^3}{\varphi(p_1)^3},
\end{align*}
Now assume \eqref{Formula for c(k)} holds for $\mathcal{C}(j)$ for some $j\in \mathbb{N}$. Then we obtain 
\begin{align*}
\mathcal{C}(j+1)&=\sum_{t_1=0}^3\cdots\sum_{t_j=0}^3\sum_{t_{j+1}=0}^3\frac{1}{(p_1-1)^{t_1}\cdots (p_k-1)^{t_j}(p_{j+1}-1)^{t_{j+1}}} \dbinom{3}{t_1}\dbinom{3}{t_2} \cdots \dbinom{3}{t_{j}}\dbinom{3}{t_{j+1}}\\
&=\mathcal{C}(j) \sum_{t_{j+1}=0}^3\frac{1}{(p_{j+1}-1)^{t_{j+1}}}\dbinom{3}{t_{j+1}} =\frac{p_{j+1}^3}{(p_{j+1}-1)^3}\mathcal{C}(j)=\frac{(p_1p_2\cdots p_{j+1})^3}{\varphi(p_1p_2\cdots p_{j+1})^3}.
    \end{align*}
Hence by induction, the proof follows.
\end{proof}
\subsection{Decomposition of $G$} We are now ready to prove the following key lemma which provides the decomposition of $G(q_0)$.
\begin{lem}\label{breakdown og G(q0)}
Let $k \in \mathbb{N}$ and suppose $q_0=p_1p_2\cdots p_k$ where $p_1,p_2,\dots, p_k$ denote the first $k$ primes. Then we have
\[G(q_0)  = \prod_{j=1}^k G(p_j).\]
\end{lem}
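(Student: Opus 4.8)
The plan is to establish the multiplicativity of $G$ in the variable $q_0$ by exploiting the Chinese Remainder Theorem decomposition already set up in \eqref{base decomposition} and \eqref{base decomposition modulos}, together with the combinatorial identity of Lemma \ref{lemma for c(k)}. The starting point is the expanded formula \eqref{g(w,q_0)} for $g(w,q_0)$ summed over $w \mid q_0$ as in \eqref{Define G}. Since $q_0 = p_1 \cdots p_k$ is squarefree, every divisor $w$ of $q_0$ is a product of a subset of the $p_j$'s, and we should reparametrize all the data — the divisor $w$, the residue $a \bmod w$, the residues $\ell_1,\ell_2,\ell_3 \bmod q_0$, and the residues $r_1,r_2,r_3 \in \mathcal{R}_0$ — prime-by-prime via CRT. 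The key observation is that the exponential $e(-am/w)$ factors as a product of local exponentials over the primes dividing $w$, the exponentials $e(-\ell_i r_i/q_0)$ factor over all $k$ primes, and crucially the Möbius–totient weight $\frac{\mu}{\varphi}\!\left(\frac{q_0}{(q_0,\,aq_0/w+\ell_i)}\right)$ also factors: because $q_0$ is squarefree, $\frac{q_0}{(q_0,\,aq_0/w+\ell_i)} = \prod_{p \mid q_0,\ p \nmid (aq_0/w+\ell_i)} p$, and $\mu/\varphi$ is multiplicative, so this weight is a product over $j$ of local weights depending only on whether $p_j$ divides $a q_0/w + \ell_i$ modulo $p_j$.

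The main computation is then to carry out the interchange of the sum over $w \mid q_0$ with the product over primes. Writing $w = \prod_{j \in \mathcal{I}} p_j$, the sum over $w \mid q_0$ becomes a sum over subsets $\mathcal{I} \subseteq \{1,\dots,k\}$, which distributes as a product $\prod_{j=1}^k \big(\text{term with } p_j \in w \ +\ \text{term with } p_j \nmid w\big)$. After substituting the CRT decompositions and collecting all factors indexed by a fixed prime $p_j$, the whole expression for $q_0^3\, G(q_0)$ should separate as $\prod_{j=1}^k \big(p_j^3\, G(p_j)\big)$ up to bookkeeping; since $q_0^3 = \prod_j p_j^3$, this yields $G(q_0) = \prod_{j=1}^k G(p_j)$. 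One subtlety to handle carefully: in the definition \eqref{Define G} the residues $\ell_1, \ell_2, \ell_3$ each range over a full residue system modulo $q_0 = p_1\cdots p_k$, so by CRT each splits into independent coordinates $\ell_{ij} \bmod p_j$; likewise each $r_i \in \mathcal{R}_0$ corresponds by construction to an independent choice of $r_{ij} \in \mathcal{R}_j$ for each $j$, so the sum over $\mathcal{R}_0$ genuinely factors as $\prod_j \big(\text{sum over } \mathcal{R}_j\big)$. The role of Lemma \ref{lemma for c(k)} — hence the combinatorial function $\mathcal{C}(b)$ of \eqref{Combinatorial Function} — is presumably to absorb the cross terms that arise from the three independent weights $\frac{\mu}{\varphi}(\cdots)$ when, for a given prime, the "divides" versus "does not divide" dichotomy is summed over the $t$ out of $3$ factors that vanish; the binomial coefficients $\binom{3}{t_j}$ count exactly which of the three copies pick up the $1/\varphi(p_j)$ contribution.

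Concretely, the steps I would carry out in order are: (1) fix the CRT isomorphism $\mathbb{Z}/q_0\mathbb{Z} \cong \prod_j \mathbb{Z}/p_j\mathbb{Z}$ and record how $w$, $a$, $\ell_i$, $r_i$, and in particular $aq_0/w + \ell_i$ decompose coordinate-wise — noting that $q_0/w \equiv 0 \bmod p_j$ when $p_j \mid w$ and is a unit mod $p_j$ when $p_j \nmid w$, so the $j$-th coordinate of $aq_0/w + \ell_i$ is either $\ell_{ij}$ or (a unit times $a_j$) plus $\ell_{ij}$; (2) factor each of the three ingredients — the exponential $e(-am/w)$, the exponentials $e(-\ell_i r_i/q_0)$, and the three Möbius–totient weights — into products over $j$; (3) swap the finite sum over $w \mid q_0$ (equivalently over subsets $\mathcal I$) with the product over $j$, and swap the sums over $a_j$, $\ell_{ij}$, $r_{ij}$ inside; (4) recognize the resulting $j$-th factor, after summing the three weights against the $\ell_{ij}$-exponentials and over the two cases $p_j \in w$ / $p_j \nmid w$, as exactly $p_j^3\, G(p_j)$, using Lemma \ref{lemma for c(k)} to handle the arising $\mathcal{C}$-type sums; (5) divide out $q_0^3 = \prod_j p_j^3$ to conclude.

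The main obstacle will be step (4): matching the bookkeeping of the local factor against the single-prime value $G(p_j)$ requires being careful about which terms in the $\ell_{ij}$-sum make the local Möbius weight equal to $1$ (when $p_j \nmid$ the relevant linear form) versus $-1/\varphi(p_j) = -1/(p_j-1)$ (when $p_j \mid$ it), and about the interaction of these choices across the three indices $i=1,2,3$ — this is precisely where the cross terms governed by $\binom{3}{t_1}\binom{3}{t_2}\cdots$ and Lemma \ref{lemma for c(k)} enter. A secondary point of care is the distinction, for each prime $p_j$, between its appearing in the divisor $w$ or not: in the former case the $j$-th coordinate of $a$ is unconstrained modulo $p_j$ (subject only to coprimality, which is automatic here), and $aq_0/w + \ell_i$ reduces to $\ell_{ij}$ mod $p_j$, whereas in the latter case $q_0/w$ is invertible mod $p_j$ and the shift by $a_j$ is nontrivial; verifying that both contributions correctly reassemble into the single formula $G(p_j)$ (which itself, via \eqref{Define G} with $q_0$ replaced by $p_j$, sums over $w \mid p_j$, i.e. $w \in \{1, p_j\}$) is the crux of the argument.
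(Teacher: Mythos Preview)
Your plan is essentially the paper's: both rely on the CRT decomposition \eqref{base decomposition}--\eqref{base decomposition modulos} to factor the exponentials, the $\mu/\varphi$ weights, and the $\mathcal{R}_0$-sums prime by prime, with Lemma \ref{lemma for c(k)} collapsing the resulting case split; the paper merely organizes this as an induction on $k$ (deriving the intermediate closed form \eqref{decomposition of g(u,q0)} for $g\big(\prod_{s\in S}p_s,\,q_0\big)$ and peeling off one prime at a time) rather than the direct subset-sum-equals-product identity you propose, but the computations are the same. One correction to your step (1): you have the dichotomy reversed---since $q_0$ is squarefree, $q_0/w\equiv 0\pmod{p_j}$ exactly when $p_j\nmid w$ (so that $p_j$ survives in $q_0/w$), while $q_0/w$ is a unit mod $p_j$ when $p_j\mid w$; consequently the $j$-th coordinate of $aq_0/w+\ell_i$ is $\ell_{ij}$ when $p_j\nmid w$ and is $(\text{unit})\cdot a_j+\ell_{ij}$ when $p_j\mid w$ (which is also the only case in which a component $a_j$ exists), and with this swap your steps (2)--(5) go through as written.
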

\begin{proof}
We will proceed by induction. When $k=1$, the claim is true by definition. For clarity, we show the subtle arguments involving the case $k=2$. By \eqref{Define G}, we write
\begin{align}\label{Breaking G}
G(p_1p_2)= g(1,p_1p_2)+g(p_1,p_1p_2)+g(p_2,p_1p_2)+g(p_1p_2,p_1p_2).
\end{align}
By the definition of $g$ in \eqref{g(w,q_0)}, we get
\begin{align*}
    g(1,p_1p_2) &= \frac{1}{(p_1p_2)^3}  \sum_{r_1\in \mathcal{R}_0}\sum_{r_2\in \mathcal{R}_0}\sum_{r_3\in \mathcal{R}_0}\sum_{\ell_1=0}^{p_1p_2-1}\sum_{\ell_2=0}^{p_1p_2-1}\sum_{\ell_3=0}^{p_1p_2-1}e \left (\frac{-\ell_1 r_1}{p_1p_2} \right )e \left (\frac{-\ell_2 r_2}{p_1p_2} \right )e \left (\frac{-\ell_3 r_3}{p_1p_2} \right )\\
    &\quad \times\frac{\mu}{\varphi}\bigg(\frac{p_1p_2}{(p_1p_2,p_1p_2+\ell_1)}\bigg)\frac{\mu}{\varphi}\bigg(\frac{p_1p_2}{(p_1p_2,p_1p_2+\ell_2)}\bigg)\frac{\mu}{\varphi}\bigg(\frac{p_1p_2}{(p_1p_2,p_1p_2+\ell_3)}\bigg).
\end{align*}
Using \eqref{base decomposition} and the discussions involving \eqref{base decomposition modulos}, we can write
\begin{align*}
    g(1,p_1p_2)
    &=  \frac{1}{p_1^3p_2^3} \underset{r_{11}, r_{21}, r_{31} \in \mathcal{R}_1}{\sum \sum \sum} \hspace{0.1cm}\underset{r_{12}, r_{22}, r_{32} \in \mathcal{R}_2}{\sum \sum \sum}  \hspace{0.1cm} \underset{0 \leqslant \ell_{11}, \ell_{21}, \ell_{31} \leqslant p_1-1 }{\sum \sum \sum} 
 \hspace{0.1cm} \underset{0 \leqslant \ell_{12}, \ell_{22}, \ell_{32} \leqslant p_2-1 }{\sum \sum \sum} e \bigg (\frac{-\ell_{11} r_{11}}{p_1p_2} \bigg )e\left (\frac{-\ell_{12} r_{12}}{p_1p_2} \right )\\
&\times e \left (\frac{-\ell_{21} r_{21}}{p_1p_2} \right ) e \left (\frac{-\ell_{22} r_{22}}{p_1p_2} \right )e \left (\frac{-\ell_{31} r_{31}}{p_1p_2} \right )e \left (\frac{-\ell_{32} r_{32}}{p_1p_2} \right ) \frac{\mu}{\varphi} \bigg(\frac{p_1p_2}{(p_1p_2,p_1p_2+\ell_{11}u_1+\ell_{12}u_2)}\bigg)\\
&\times \frac{\mu}{\varphi} \bigg(\frac{p_1p_2}{(p_1p_2,p_1p_2+\ell_{21}u_1+\ell_{22}u_2)}\bigg)\frac{\mu}{\varphi} \bigg(\frac{p_1p_2}{(p_1p_2,p_1p_2+\ell_{31}u_1+\ell_{32}u_2)}\bigg).\\
\end{align*}
Since $\mu$ and $\varphi$ are multiplicative, we rewrite $g(1,p_1p_2)$ as
\begin{align}
    g(1,p_1p_2)
    &=  \frac{1}{p_1^3p_2^3} \underset{r_{11}, r_{21}, r_{31} \in \mathcal{R}_1}{\sum \sum \sum} \hspace{0.1cm}\underset{r_{12}, r_{22}, r_{32} \in \mathcal{R}_2}{\sum \sum \sum}  \hspace{0.1cm} \underset{0 \leqslant \ell_{11}, \ell_{21}, \ell_{31} \leqslant p_1-1 }{\sum \sum \sum} 
 \hspace{0.1cm} \underset{0 \leqslant \ell_{12}, \ell_{22}, \ell_{32} \leqslant p_2-1 }{\sum \sum \sum} e \left (\frac{-\ell_{11} r_{11}}{p_1p_2} \right )e\left (\frac{-\ell_{12} r_{12}}{p_1p_2} \right )\notag\\
&\times e \left (\frac{-\ell_{21} r_{21}}{p_1p_2} \right ) e \left (\frac{-\ell_{22} r_{22}}{p_1p_2} \right )e \left (\frac{-\ell_{31} r_{31}}{p_1p_2} \right )e \left (\frac{-\ell_{32} r_{32}}{p_1p_2} \right ) \frac{\mu}{\varphi}\bigg(\frac{p_1}{(p_1,\ell_{11}u_1)}\bigg)\frac{\mu}{\varphi}\bigg(\frac{p_2}{(p_2,\ell_{12}u_2)}\bigg) \notag\\
&\times \frac{\mu}{\varphi}\bigg(\frac{p_1}{(p_1,\ell_{21}u_1)}\bigg)\frac{\mu}{\varphi}\bigg(\frac{p_2}{(p_2,\ell_{22}u_2)}\bigg)\frac{\mu}{\varphi}\bigg(\frac{p_1}{(p_1,\ell_{31}u_1)}\bigg)\frac{\mu}{\varphi}\bigg(\frac{p_2}{(p_2,\ell_{32}u_2)}\bigg).\label{g(1,p1p2) further decomposition}
\end{align}
For $j=1,2$ and $i=1,2,3$, if $p_j\mid (p_1p_2+\ell_i)$, then by \eqref{base decomposition modulos}, $p_j\mid \ell_{ij}$. Moreover, since $\ell_{ij}$ runs through the complete residue class of $p_j$, if $p_j\mid \ell_{ij}$, $\ell_{ij} = 0$ is the only solution. Therefore for each $i$ and $j$, we can divide the sum over $\ell_{ij}$ into two cases, $\ell_{ij}=0$ and $\ell_{ij}\neq 0$. If $\ell_{ij}=0$, 
\begin{align}
    e\left (\frac{-\ell_{ij} r_{ij}}{p_1p_2} \right ) = 1\quad\text{and}\quad \frac{\mu}{\varphi}\left(\frac{p_j}{(p_j,\ell_{ij}u_j)}\right)=1.\label{l_ij = 0}
\end{align}
On the other hand, if $\ell_{ij}\neq 0$, then $p_j\nmid\ell_{ij}$, so
\begin{align}
    \frac{\mu}{\varphi}\left(\frac{p_j}{(p_j,\ell_{ij}u_j)}\right)=  -\frac{1}{\varphi(p_j)}.\label{l_ij neq 0 part 1}
\end{align}
Since $r_{ij}$ and $q_0/p_j$ are both coprime to $p_j$, we deduce that
\begin{align}
    \sum_{\ell_{ij}=1}^{p_j-1}e\left (\frac{-\ell_{ij} r_{ij}}{p_1p_2} \right ) =  - e\left (\frac{-\ell_{ij} r_{ij}}{p_1p_2} \right ).\label{l_ij neq 0 part 2}
\end{align}
Observe that when we substitute \eqref{l_ij neq 0 part 1} and \eqref{l_ij neq 0 part 2} back into \eqref{g(1,p1p2) further decomposition}, the sign in front of the exponential sum and $\mu/\varphi$ cancel out. Since each $\ell_{ij}$ can be divided into the above two cases, there are in total $4^3=64$ cases, and so using \eqref{l_ij = 0}-\eqref{l_ij neq 0 part 2}, we get
\begin{align}\label{gp1p2 final form 1}
    g(1,p_1p_2)    
    &=\frac{\mathcal{C}(2) }{p_1^3p_2^3}\underset{r_{11},r_{21},r_{31}\in \mathcal{R}_1}{\sum\sum\sum}\hspace{0.1cm}\underset{r_{12},r_{22},r_{32}\in \mathcal{R}_2}{\sum\sum\sum} 1=\frac{1}{\varphi(p_1p_2)^3}\mathcal{R}_1^3\mathcal{R}_2^3=\frac{1}{\varphi(p_1p_2)^3} \mathcal{R}_0^3,
\end{align}
where the second equality follows from Lemma \ref{lemma for c(k)}. Similarly as above, we have
\begin{align}\label{g(p1p2)}
    g(p_1, p_1p_2) 
    &=\frac{1}{(p_1p_2)^3} \sum_{a=1}^{p_1-1}e\left(\frac{-am}{p_1}\right)\underset{r_{11},r_{21},r_{31}\in \mathcal{R}_1}{\sum\sum\sum}\hspace{0.1cm}\underset{r_{12},r_{22},r_{32}\in \mathcal{R}_2}{\sum\sum\sum} \hspace{0.1cm} \underset{0 \leqslant \ell_{11}, \ell_{21}, \ell_{31} \leqslant p_1-1 }{\sum \sum \sum} 
 \hspace{0.1cm} \underset{0 \leqslant \ell_{12}, \ell_{22}, \ell_{32} \leqslant p_2-1 }{\sum \sum \sum} \notag \\
    &\quad \times e \left (\frac{-\ell_{11} r_{11}}{p_1p_2} \right ) e\left (\frac{-\ell_{12} r_{12}}{p_1p_2} \right )e \left (\frac{-\ell_{21} r_{21}}{p_1p_2} \right ) e \left (\frac{-\ell_{22} r_{22}}{p_1p_2} \right )e \left (\frac{-\ell_{31} r_{31}}{p_1p_2} \right ) e \left (\frac{-\ell_{32} r_{32}}{p_1p_2} \right ) \notag\\
    &\quad \times
\frac{\mu}{\varphi}\bigg(\frac{p_1p_2}{(p_1p_2,ap_2+\ell_{11}u_1+\ell_{12}u_2)}\bigg)\frac{\mu}{\varphi}\bigg(\frac{p_1p_2}{(p_1p_2,ap_2+\ell_{21}u_1+\ell_{22}u_2)}\bigg) \notag\\
&\quad \times\frac{\mu}{\varphi}\bigg(\frac{p_1p_2}{(p_1p_2,ap_2+\ell_{31}u_1+\ell_{32}u_2)}\bigg).
\end{align}
Note that if $p_2\mid (ap_2+\ell_i)$, then $p_2\mid \ell_{i2}$, and if $p_1\mid (ap_2+\ell_i)$, then $\ell_{i1}\equiv -ap_2\bmod p_1$. Since $\ell_{i1}$ runs through the entire residue class modulo $p_1$, only one such $\ell_{i1}$ could satisfy the congruence condition. The same argument forces $\ell_{i2}$ to be 0. When $\ell_{i1}\equiv -ap_2\bmod p_1$, we have
\begin{align}
e\left (\frac{-\ell_{i1} r_{i1}}{p_1p_2} \right ) = e\left (\frac{a r_{i1}}{p_1} \right )\quad\text{and}\quad \frac{\mu}{\varphi}\left(\frac{p_1}{(p_1,\ell_{i1}u_1)}\right)=1.\label{l_ij = -ap2}
\end{align}
On the other hand, when $\ell_{i1}\not\equiv -ap_2\bmod p_1$, then we deduce that
\begin{align}
\sum_{\substack{\ell_{i1}=0\\e_{i1}\not\equiv -ap_2\bmod p_1}}^{p_1-1}e\left (\frac{-\ell_{i1} r_{i1}}{p_1p_2} \right ) = -e\left (\frac{a r_{i1}}{p_1} \right )\quad\text{and}\quad\frac{\mu}{\varphi}\left(\frac{p_1}{(p_1,\ell_{i1}u_1)}\right)=-\frac{1}{\phi(p_1)}.\label{l_ij != -ap2} 
\end{align}
Substituting \eqref{l_ij = -ap2} and \eqref{l_ij != -ap2} in \eqref{g(p1p2)}, we arrive at
\begin{align}\label{gp1p2 final form 2}
g(p_1,p_1p_2)&= \frac{\mathcal{C}(2) }{p_1^3p_2^3}\underset{r_{11},r_{21},r_{31}\in \mathcal{R}_1}{\sum\sum\sum}\hspace{0.1cm}\underset{r_{12},r_{22},r_{32}\in \mathcal{R}_2}{\sum\sum\sum}\sum_{a=1}^{p_1-1}e \left (\frac{a(r_{11}+r_{21}+r_{31}-m)}{p_1} \right) \notag \\
&=\frac{\mathcal{R}_2^3}{\varphi(p_1p_2)^3} \bigg(-\mathcal{R}_1^3+p_1 \sum_{\substack{r_{11},r_{21},r_{31}\\r_{11}+r_{21}+r_{31}\equiv m\bmod p_1}} 1\bigg) \notag \\
&= \frac{1}{\varphi(p_1p_2)^3} \bigg(-\mathcal{R}_0^3+p_1\mathcal{R}_2^3\sum_{\substack{r_{11},r_{21},r_{31}\\r_{11}+r_{21}+r_{31}\equiv m\bmod p_1}} 1\bigg).
\end{align}
By symmetry, we also have
\begin{align}\label{gp1p2 final form 3}
    g(p_2, p_1p_2) &= \frac{1}{\varphi(p_1p_2)^3} \bigg(-\mathcal{R}_0^3+p_2\mathcal{R}_1^3\sum_{\substack{r_{12},r_{22},r_{32}\\r_{12}+r_{22}+r_{32}\equiv m\bmod p_2}} 1\bigg).
\end{align}
Lastly, we have
\begin{align*}
    g(p_1p_2, p_1p_2&) 
    =\frac{1}{p_1^3p_2^3} \sum_{\substack{a=1\\(a, p_1p_2)=1}}^{p_1p_2-1}e\left(\frac{-am}{p_1p_2}\right) \underset{r_{11},r_{21},r_{31}\in \mathcal{R}_1}{\sum\sum\sum}\hspace{0.1cm}\underset{r_{12},r_{22},r_{32}\in \mathcal{R}_2}{\sum\sum\sum} \hspace{0.1cm} \underset{0 \leqslant \ell_{11}, \ell_{21}, \ell_{31} \leqslant p_1-1 }{\sum \sum \sum} 
 \hspace{0.1cm} \underset{0 \leqslant \ell_{12}, \ell_{22}, \ell_{32} \leqslant p_2-1 }{\sum \sum \sum} \\
    &\quad\times e \left (\frac{-\ell_{11} r_{11}}{p_1p_2} \right ) e\left (\frac{-\ell_{12} r_{12}}{p_1p_2} \right )e \left (\frac{-\ell_{21} r_{21}}{p_1p_2} \right ) e \left (\frac{-\ell_{22} r_{22}}{p_1p_2} \right )e \left (\frac{-\ell_{31} r_{31}}{p_1p_2} \right ) e \left (\frac{-\ell_{32} r_{32}}{p_1p_2} \right )\\
    &\quad \times  
    \frac{\mu}{\varphi}\bigg(\frac{p_1p_2}{(p_1p_2,a+\ell_{11}u_1+\ell_{12}u_2)}\bigg)\frac{\mu}{\varphi}\bigg(\frac{p_1p_2}{(p_1p_2,a+\ell_{21}u_1+\ell_{22}u_2)}\bigg)\\
    &\quad\times\frac{\mu}{\varphi}\bigg(\frac{p_1p_2}{(p_1p_2,a+\ell_{31}u_1+\ell_{32}u_2)}\bigg).
\end{align*}
Same as before, if $p_j\mid (a+\ell_i)$, then $p_j\equiv -a\bmod (p_1p_2/p_j)$. Again, only one such $\ell_{ij}$ can satisfy this condition, and so for each $\ell_{ij}$ we divide into two cases, and in total there are 64 cases. Using similar steps as in the previous cases, we obtain
\begin{align}\label{gp1p2 final form 4}
    g(p_1p_2, p_1p_2) &=\frac{\mathcal{C}(2)}{p_1^3p_2^3} \underset{r_{11},r_{21},r_{31}\in \mathcal{R}_1}{\sum\sum\sum}\hspace{0.1cm}\underset{r_{12},r_{22},r_{32}\in \mathcal{R}_2}{\sum\sum\sum}\sum_{a_1=1}^{p_1-1}\sum_{a_2=1}^{p_2-1}e\left(\frac{a_1(r_{11}+r_{21}+r_{31}-m)}{p_1p_2}\right)  \notag \\
    &\quad \times e\left(\frac{a_2(r_{12}+r_{22}+r_{32}-m)}{p_1p_2}\right) \notag \\
    &=\frac{1}{\varphi(p_1p_2)^3} \bigg(- \mathcal{R}_1^3+p_1\sum_{\substack{r_{11},r_{21},r_{31}\\r_{11}+r_{21}+r_{31}\equiv m\bmod p_1}}1 \bigg)\bigg(- \mathcal{R}_2^3+p_2\sum_{\substack{r_{12},r_{22},r_{32}\\r_{12}+r_{22}+r_{32}\equiv m\bmod p_2}}1 \bigg).
\end{align}
Putting together \eqref{gp1p2 final form 1}, \eqref{gp1p2 final form 2}, \eqref{gp1p2 final form 3} and \eqref{gp1p2 final form 4} in \eqref{Breaking G}, we get
\begin{align*}
G(p_1p_2)&=\frac{1}{\varphi(p_1p_2)^3}\bigg(\mathcal{R}_0^3-\mathcal{R}_0^3+p_1\mathcal{R}_2^3\sum_{\substack{r_{11},r_{21},r_{31}\\r_{11}+r_{21}+r_{31}\equiv m\bmod p_1}} 1-\mathcal{R}_0^3 +p_2\mathcal{R}_1^3\sum_{\substack{r_{12},r_{22},r_{32}\\r_{12}+r_{22}+r_{32}\equiv m\bmod p_2}} 1 \\
    &\quad +\mathcal{R}_0^3-p_2\mathcal{R}_1^3\sum_{\substack{r_{12},r_{22},r_{32}\\r_{12}+r_{22}+r_{32}\equiv m\bmod p_2}}1-\mathcal{R}_2^3p_1 \sum_{\substack{r_{11},r_{21},r_{31}\\r_{11}+r_{21}+r_{31}\equiv m\bmod p_1}}1\\
    &\quad +p_1p_2\sum_{\substack{r_{11},r_{21},r_{31}\\r_{11}+r_{21}+r_{31}\equiv m\bmod p_1}}\sum_{\substack{r_{12},r_{22},r_{32}\\r_{12}+r_{22}+r_{32}\equiv m\bmod p_2}}1\bigg)=G(p_1)G(p_2).
\end{align*}
This shows that the desired claim holds for $k=2$.
\par
Now assume the desired result is true for some $j\geqslant 2$. Define
\[
\mathcal{T}(j): = \text{the power set of $\{1,2,\cdots,j\}$}.
\]
Let $S$ be an element in $\mathcal{T}(j)$, and define $\bar{S}$ to be the complement of $S$. Then we can write
\begin{align*}
    g(\prod_{s\in S}p_s&,\hspace{0.1cm}  p_1p_2\cdots p_j) \\
    &= \frac{1}{(p_1p_2\cdots p_j)^3}\sum_{\substack{1\leqslant a\leqslant \prod_{s\in S}p_s\\(a,\prod_{s\in S}p_s)=1}}e\left(-\frac{am}{\prod_{s\in S}p_s}\right)\underset{r_{11},r_{21},r_{31}\in \mathcal{R}_1}{\sum\sum\sum}\hspace{0.1cm}\cdots \underset{r_{1j},r_{2j},r_{3j}\in \mathcal{R}_j}{\sum\sum\sum}\hspace{0.1cm}\underset{\ell_{11},\ell_{21},\ell_{31}\in \Z_{p_1}}{\sum\sum\sum}\hspace{0.1cm}\\
    &\quad \times\cdots\underset{\ell_{1j},\ell_{2j},\ell_{3j}\in \Z_{p_j}}{\sum\sum\sum}\hspace{0.1cm}e\left(\frac{-(\ell_{11}r_{11}+\ell_{21}r_{21}+\ell_{31}r_{31})}{p_1p_2\cdots p_j}\right)\cdots e\left(\frac{-(\ell_{1j}r_{1j}+\ell_{2j}r_{2j}+\ell_{3j}r_{3j})}{p_1p_2\cdots p_j}\right)\\
    &\quad \times\frac{\mu}{\varphi}\bigg(\frac{p_1p_2\cdots p_j}{(p_1p_2\cdots p_j,a\prod_{\bar{s}\in \bar{S}}p_{\bar{s}}+\ell_{11}u_1+\ell_{12}u_2+\cdots+\ell_{1j}u_j)}\bigg)\\
    &\quad \times  \frac{\mu}{\varphi}\bigg(\frac{p_1p_2\cdots p_j}{(p_1p_2\cdots p_j, a\prod_{\bar{s}\in \bar{S}}p_{\bar{s}}+\ell_{21}u_1+\ell_{22}u_2+\cdots+\ell_{2j}u_j)}\bigg)\\
    &\quad \times  \frac{\mu}{\varphi} \bigg(\frac{p_1p_2\cdots p_j}{(p_1p_2\cdots p_j,a\prod_{\bar{s}\in \bar{S}}p_{\bar{s}}+\ell_{31}u_1+\ell_{32}u_2+\cdots+\ell_{3j}u_j)}\bigg).
\end{align*}
Using the same reasoning as in the case for $k=2$, if $p_b\mid (a\prod_{\bar{s}\in \bar{S}}p_{\bar{s}}+\ell_{i1}u_1+\ell_{i2}u_2+\cdots+\ell_{ij}u_j)$ for $b\in S$, then $\ell_{ib}\equiv -a\prod_{\bar{s}\in \bar{S}}p_{\bar{s}}\bmod p_b$ . While for $b\in\bar{S}$, $p_b\mid \ell_{ib}$. Since each $\ell_{ib}$ run through the entire residue class modulo $p_b$, only one such $\ell_{ib}$ could satisfy the case. Therefore there are in total $(2^j)^3 = 8^j$ cases and we obtain
\begin{align}\label{General case 1}
    g(\prod_{s\in S}p_s, \hspace{0.1cm} p_1p_2\cdots p_j)&= \frac{\mathcal{C}(j)}{(p_1p_2\cdots p_k)^3}\underset{r_{11},r_{21},r_{31}\in \mathcal{R}_1}{\sum\sum\sum}\hspace{0.1cm}\cdots \underset{r_{1j},r_{2j},r_{3j}\in \mathcal{R}_j}{\sum\sum\sum}\hspace{0.1cm}\sum_{\substack{1\leqslant a\leqslant \prod_{s\in S}p_s\\(a,\prod_{s\in S}p_s)=1}}e\left(-\frac{am}{\prod_{s\in S}p_s}\right)\notag \\
    &\quad \times \prod_{s\in S}e\left(\frac{(ar_{1s}+ar_{2s}+ar_{3s})}{\prod_{s\in S}p_s}\right).
\end{align}
Using the Chinese Remainder theorem and Lemma \ref{lemma for c(k)}, the right hand side of \eqref{General case 1} is equal to
\begin{align}
&\frac{1}{\varphi(p_1p_2\cdots p_j)^3}\underset{r_{11},r_{21},r_{31}\in \mathcal{R}_1}{\sum\sum\sum}\hspace{0.1cm}\cdots \underset{r_{1j},r_{2j},r_{3j}\in \mathcal{R}_j}{\sum\sum\sum}\hspace{0.1cm}\prod_{s\in S} \left(\sum_{a_s=1}^{p_s-1}e\left(\frac{a_s(r_{1s}+r_{2s}+r_{3s}-m)}{\prod_{s\in S}p_s}\right)\right)\notag\\
    &=\frac{\prod_{\bar{s}\in\bar{S}}(\mathcal{R}_{\bar{s}})^3}{\varphi(p_1p_2\cdots p_j)^3} \prod_{s\in S} \bigg(-\mathcal{R}_s^3+p_s\sum_{\substack{r_{1s},r_{2s},r_{3s}\\r_{1s}+r_{2s}+r_{3s}\equiv m\bmod p_s}}1\bigg).\label{decomposition of g(u,q0)}
\end{align}
Now we are ready to prove the statement for $k=j+1$. Using \eqref{decomposition of g(u,q0)}, we have
\begin{align*}
    G(p_1p_2\cdots p_{j+1})=\frac{1}{\varphi(p_1p_2\cdots p_{j+1})^3}\sum_{S\in \mathcal{T}(j+1)} \prod_{\bar{s}\in \bar{S}}\mathcal{R}_{\bar{s}}^3 \prod_{s\in S} \bigg(-\mathcal{R}_s^3+p_s\sum_{\substack{r_{1s},r_{2s},r_{3j}\\r_{1s}+r_{2s}+r_{3s}\equiv m\bmod p_s}}1\bigg).
\end{align*}
Notice that for any $S\in\mathcal{T}(j+1)$, either $S= S'\cup\{j+1\}$ or $\bar{S} = \bar{S'}\cup\{j+1\}$ for some $S'\in \mathcal{T}(j)$. Using this fact together with the inductive hypothesis, we have
\begin{align*}
    G(&p_1p_2\cdots p_{j+1})\\
    &=\frac{1}{\varphi(p_{j+1})^3}\cdot\frac{1}{\varphi(p_1p_2\cdots p_j)^3}\mathcal{R}_{j+1}^3\sum_{S'\in \mathcal{T}(j)}  \prod_{\bar{s}\in \bar{S}'}\mathcal{R}_{\bar{s}}^3 \prod_{s\in S'} \bigg(-\mathcal{R}_s^3+p_s\sum_{\substack{r_{1s},r_{2s},r_{3j}\\r_{1s}+r_{2s}+r_{3s}\equiv m\bmod p_s}}1\bigg)\\
    &\quad +\bigg(-\mathcal{R}_{j+1}^3+p_{j+1}\sum_{\substack{r_{1\{j+1\}},r_{2\{j+1\}},r_{3\{j+1\}}\\r_{1\{j+1\}}+r_{2\{j+1\}}+r_{3\{j+1\}}\equiv m\bmod p_{j+1}}}1\bigg)\sum_{S'\in \mathcal{T}(j)}  \prod_{\bar{s}\in \bar{S'}}\mathcal{R}_{\bar{s}}^3\\
    &\quad\times\prod_{s\in S'} \bigg(-\mathcal{R}_s^3+p_s\sum_{\substack{r_{1s},r_{2s},r_{3s}\\r_{1s}+r_{2s}+r_{3s}\equiv m\bmod p_s}}1\bigg)\\
    &=\frac{1}{\varphi(p_{j+1})^3}\bigg(\mathcal{R}_{j+1}^3+\bigg(-\mathcal{R}_{j+1}^3+p_{j+1}\sum_{\substack{r_{1\{j+1\}},r_{2\{j+1\}},r_{3\{j+1\}}\\r_{1\{j+1\}}+r_{2\{j+1\}}+r_{3\{j+1\}}\equiv m\bmod p_{j+1}}}1\bigg)\bigg)\prod_{b=1}^j G(j)\\
    &=\prod_{b=1}^{j+1} G(p_b),
\end{align*}
where the last equality holds if we can show that for any prime $p$, 
\begin{align}\label{Final Claim}
G(p)= \frac{p}{\varphi(p)^3}\sum_{\substack{r_{11},r_{21},r_{31}\\m\equiv r_{11}+r_{21}+r_{31}\bmod p}} 1.
\end{align}
To prove \eqref{Final Claim}, we begin by writing $G(p) = g(1,p)+g(p,p)$. Then by definition of $g$ in \eqref{g(w,q_0)}, we have 
\begin{align*}
g(1,p)  &=\frac{1}{p^3}\underset{r_{11},r_{21},r_{31}\in \mathcal{R}_1}{\sum\sum\sum}\hspace{0.1cm}\underset{\ell_{11},\ell_{21},\ell_{31}\in \Z_{p}}{\sum\sum\sum}\hspace{0.1cm}e \left (\frac{-\ell_{11} r_{11}}{p} \right )e \left (\frac{-\ell_{21} r_{21}}{p} \right )e \left (\frac{-\ell_{31} r_{31}}{p} \right )\\
     &\quad \times\frac{\mu}{\varphi}\bigg(\frac{p}{(p,p+\ell_{11})}\bigg)\frac{\mu}{\varphi}\bigg(\frac{p}{(p,p+\ell_{21})}\bigg)\frac{\mu}{\varphi}\bigg(\frac{p}{(p,p+\ell_{31})}\bigg).
\end{align*}
Similar to the case for $k=2$ above, after discussing congruence relations on $\ell_{i1}$ for $i=1,2,3$, we get 
\begin{align}\label{Separate Case 1}
g(1,p) = \frac{1}{\varphi(p)^3}\mathcal{R}_0^3.
\end{align}
Following in the same manner, we write 
\begin{align*}
g(p,p)
    &=\frac{1}{p^3} \sum_{a=1}^{p-1} e\left( -\frac{am}{p}\right) \underset{r_{11},r_{21},r_{31}\in \mathcal{R}_1}{\sum\sum\sum}\hspace{0.1cm}\underset{\ell_{11},\ell_{21},\ell_{31}\in \Z_{p}}{\sum\sum\sum}\hspace{0.1cm}e \left (\frac{-\ell_{11} r_{11}}{p} \right )e \left (\frac{-\ell_{21} r_{21}}{p} \right )  e \left (\frac{-\ell_{31} r_{31}}{p} \right )\\
    &\quad \times \frac{\mu}{\phi}\bigg(\frac{p}{(p,a+\ell_{11})}\bigg)\frac{\mu}{\phi}\bigg(\frac{p}{(p,a+\ell_{21})}\bigg)\frac{\mu}{\phi}\bigg(\frac{p}{(p,a+\ell_{31})}\bigg).
\end{align*}
Note that $a+\ell_{i1}$ is at most $2p-2$, so in order to have $p\mid a+\ell_{i1}$, we need $a+\ell_{i1}=p$. Therefore we again break each $\ell_{i1}$ into two cases accordingly, and there are in total $8$ cases. We then obtain
\begin{align}\label{Separate Case 2}
g(p,p)&=\frac{1}{\varphi(p)^3}\underset{r_{11},r_{21},r_{31}\in \mathcal{R}_1}{\sum\sum\sum}\hspace{0.1cm}\sum_{a=1}^{p-1} e \left (\frac{a(r_{11}+r_{21}+r_{31}-m)}{p} \right ) \notag \\
&= \bigg(\frac{p}{\varphi(p)^3}\sum_{\substack{r_{11},r_{21},r_{31}\\m\equiv r_{11}+r_{21}+r_{31}\bmod p}} 1\bigg) -\frac{\mathcal{R}_0^3}{\varphi(p)^3}.
\end{align}
Thus combining \eqref{Separate Case 1} and \eqref{Separate Case 2}, we arrive at 
\begin{align}
  G(p) = g(1,p)+g(p,p) = \frac{p}{\varphi(p)^3}\sum_{\substack{r_{11},r_{21},r_{31}\\m\equiv r_{11}+r_{21}+r_{31}\bmod p}} 1. \label{G(p) formula}  
\end{align}
This is exactly what we were left with to show. Therefore by induction, the proof follows.
\end{proof}
\par
\subsection{A Lower Bound for $ \mathfrak{S}(m,q_0, \mathcal{R}_0)$} By Lemma \ref{breakdown og G(q0)}, we see that 
\begin{align}
    G(q_0) = G(p_1p_2\cdots p_k)=\prod_{j=1}^k G(p_j).\label{G(q0) = product of G(p)}
\end{align}
To prove that $G(q_0)>0$, it suffices to show that $G(p)>0$ for any prime $p$. By Lemma \ref{sum of three numbers modulo pj}, we have
\[
\sum_{\substack{r_{11},r_{21},r_{31}\\m\equiv r_{11}+r_{21}+r_{31}\bmod p}} 1 \geqslant 1.
\]
Therefore from the expression of $G(p)$ in \eqref{G(p) formula}, we obtain 
\begin{align}
    G(p)>\frac{p}{\varphi(p)^3}>0.\label{G(p)}
\end{align}
Finally substituting \eqref{G(p)} into \eqref{G(q0) = product of G(p)}, we conclude that
\begin{align}\label{G Nonzero}
    G(q_0) \geqslant \frac{p_1p_2\cdots p_k}{\varphi(p_1p_2\cdots p_k)^3}>0.
\end{align} 
Since $G(q_0)$ is nonzero, we conclude that the arithmetic factor $\mathfrak{S}(m,q_0, \mathcal{R}_0)$ is nonzero for odd $m$, and
\begin{align}\label{Sigma Nonzero}
\mathfrak{S}(m,q_0, \mathcal{R}_0)\geqslant\frac{p_1p_2\cdots p_k}{\varphi(p_1p_2\cdots p_k)^2}\prod_{\substack{p\nmid q_0\\p|m}}\left(1-\frac{1}{(p-1)^2}\right)\prod_{\substack{p\nmid q_0\\p\nmid m}}\left(1+\frac{1}{(p-1)^3}\right).
\end{align}
\section{Completion of the Proof of Main Theorem}\label{sec: Completion}
In what follows, we provide the final arguments for the proof of Lemma \ref{Main Lemma} and thereby complete the proof of Theorem \ref{Main Theorem}. Recall that in \eqref{Major Arcs Error Term Step 0}, we have
\begin{align}
    \int_{\mathfrak{M}}& S(\alpha)^3 e(-m \alpha) \, d   \alpha = \mathcal{H}(m,u) \mathfrak{S}(m,q_0, \mathcal{R}_0) + O(u^2(\log u)^{12A-B+\varepsilon}),\notag
\end{align}
for any $\varepsilon>0$. Note that for $m\in [4u,5u]$, $\frac{1}{2}u^2\leqslant \mathcal{H}(m,u)\leqslant \frac{3}{4}u^2$. Define 
\begin{align}\label{Sigma Prime Def}
\mathfrak{S}'(m,q_0, \mathcal{R}_0):=\mathfrak{S}(m,q_0, \mathcal{R}_0) \frac{\mathcal{H}(m,u)}{u^2}.
\end{align}
Then we have 
\begin{align*}
     \int_{\mathfrak{M}}& S(\alpha)^3 e(-m \alpha) \, d   \alpha = u^2\mathfrak{S}'(m,q_0, \mathcal{R}_0) + O(u^2(\log u)^{12A-B+\varepsilon}),
\end{align*}
where 
\begin{align}\label{Lower Bound for Sigma Prime}
\mathfrak{S}'(m,q_0, \mathcal{R}_0)\geqslant \frac{p_1p_2\cdots p_k}{2\varphi(p_1p_2\cdots p_k)^2}\prod_{\substack{p\nmid q_0\\p|m}}\left(1-\frac{1}{(p-1)^2}\right)\prod_{\substack{p\nmid q_0\\p\nmid m}}\left(1+\frac{1}{(p-1)^3}\right)>0.
\end{align}
From \eqref{Minor Arcs Prefinal Step 3}, we obtain that
\begin{align} 
\int_{\mathfrak{m}} (S(\alpha))^3 e(-\alpha m) \, d   \alpha \ll \frac{u^2}{(\log u)^{\frac{B}{2}-5A-5-2\varepsilon}}.
\end{align}
Choosing $B=\max\{13A+\varepsilon, 10A+10+4\varepsilon\}$, we see that $\mathfrak{R}(m)$, which is defined to be the number of representations of writing an integer $m\in [4u,5u]$ as a sum of three prime numbers from the set $\mathbb{P}_u$ weighted by the Van-Mangoldt function in \eqref{Representation of m}, is as follows: 
\begin{align}\label{Asymptotic Formula}
    \mathfrak{R}(m)&=\int_{\mathfrak{m}} (S(\alpha))^3 e(-\alpha m) \, d   \alpha +\int_{\mathfrak{M}} S(\alpha)^3 e(-m \alpha) \, d   \alpha \notag \\
    &=u^2\mathfrak{S}'(m,q_0, \mathcal{R}_0) + O\bigg(\frac{u^2}{(\log u)^A}\bigg), 
\end{align}
where $\mathfrak{S}'(m,q_0, \mathcal{R}_0)>0$. This proves part (b) of Lemma \ref{Main Lemma}. Moreover, recall that our construction of the set $\mathbb{P}_u$ with such $\mathcal{R}_0$ already satisfies part (a) of the lemma, so the proof of Lemma \ref{Main Lemma} is complete. Hence Theorem \ref{Main Theorem} follows.
\bibliographystyle{plain}

\end{document}